\numberwithin{equation}{section}
\newtheorem{proposition}{Proposition}[section]
\newtheorem{theorem}[proposition]{Theorem}
\newtheorem{hypothesis}{Hypothesis}
\newtheorem{remark}{Remark}[section]
\newcommand{\punto}{\,\,\cdot\,\,}
\newcommand{\smallfrac}[2]{{\textstyle\frac{#1}{#2}}}
\title{Strong coupling of finite element methods for the Stokes-Darcy problem\footnote{This research was partially supported 
by  Ministery of Education of Spain through the Project MTM2010-18427. }}
\author{
{\sc Antonio M\'arquez}\thanks{Departamento de Construcci\'on e Ingenier\'\i a de
Fabricaci\'on, Universidad de Oviedo, Oviedo, Espa\~na,
e-mail: {\tt amarquez@uniovi.es}}
$\,\,$
{\sc Salim Meddahi}\thanks{Departamento de Matem\'aticas, Facultad de Ciencias,
Universidad de Oviedo, Calvo Sotelo s/n, Oviedo, Espa\~na,
e-mail: {\tt salim@uniovi.es}}$\,\,$
{\sc Francisco-Javier Sayas}\thanks{Department of Mathematical Sciences,
University of Delaware, Newark DE 19716, USA, e-mail: {\tt
fjsayas@math.udel.edu}}
}
\date{}
\begin{document}

\maketitle

\begin{abstract}
The aim of this paper is to propose a systematic way to obtain 
convergent finite element schemes for the Darcy-Stokes flow problem by 
combining well-known mixed finite elements that are separately convergent for  
Darcy and Stokes problems. In the  approach in which the Darcy problem 
is set in its natural $\mathbf{H}(\text{div})$ formulation  
and  the Stokes problem is expressed in velocity-pressure form,  
the transmission condition ensuring global mass conservation becomes essential. 
As opposed to the strategy that  handles weakly this transmission condition 
through a Lagrange multiplier, we impose here this restriction  exactly 
in the space of global velocity field. 
Our analysis of the Galerkin discretization of the resulting problem 
reveals that, if the mixed finite element space used in the Darcy domain admits an
$\mathbf{H}(\text{div})$-stable discrete lifting of the normal trace, 
then it can be combined with any stable Stokes mixed finite 
element of the same order to deliver a stable global method with quasi-optimal convergence rate. 
Finally, we present a series of numerical tests confirming our theoretical convergence estimates.  
\end{abstract}


\section{Introduction}\label{sec:1}
In this paper we are interested by the mixed finite element approximation of the coupled 
Darcy-Stokes problem. The Darcy-Stokes coupled system provides a linear model 
for the simulation of incompressible flows in heterogeneous media. It   
is governed by the Stokes equations in one part of the domain, while in the other part, 
the flow is described by a standard second order elliptic equation derived from Darcy's law and 
conservation of mass. Proper transmission conditions must also be prescribed 
on the boundary common to the two media: conservation of mass enforces continuity of the normal 
velocities at this interface and conservation of momentum enforces the balance of 
the normal stresses. A further interface condition, supported by empirical evidence 
and known as the Beavers-Joseph-Saffman  condition, must also be taken into account, 
cf. \cite{beavers, saffman, jager}. 

The development of suitable numerical methods for the Darcy-Stokes flow interaction  
has become a subject of increasing interest during the last decade. Discacciati et al.  
\cite{miglio} provided the first theoretical study of the problem. 
The Galerkin scheme discussed in \cite{miglio} is based on a standard 
finite element method for second order elliptic problems in the Darcy domain. 
Motivated by the need of obtaining direct finite element 
approximations of the Darcy flux, most of the approaches (cf. for instance \cite{layton, bernardi, 
GatMedOya09, karper, GatOyaSay11} ) consider now an $\mathbf{H}(\text{div})$-flux conforming formulation 
in the Darcy domain. Hence, in this case, a mixed variational formulation in the porous media is 
coupled with the usual velocity-pressure formulation in the Stokes domain. 
We also point out that other finite element discretization strategies have been considered 
for this problem. For instance,  a discontinuous Galerkin method is described in \cite{riviere} and a  
stabilized finite element method is presented in \cite{burman}.

In the mixed approach, the transmission condition that guaranties the equilibrium of the normal 
velocities becomes essential. One approach to deal with this constraint consists in enforcing it 
weakly by means of a Lagrange multiplier representing the trace of the Darcy pressure on the common 
interface, see for example \cite{layton, GatMedOya09, GatOyaSay11}. One known feature of the 
finite element discretization of this  formulation is that two meshes are required on the transmission 
boundary satisfying a stability condition between their corresponding mesh sizes, 
which certainly constitutes a quite cumbersome restriction.
  
In this paper we are interested in the strong coupling that considers the inclusion of the essential 
transmission condition  directly in the definition of the space to which 
the Darcy flux  and the fluid velocity belong (see \eqref{strong} below). This formulation prevents   
from introducing  a further unknown (the Lagrange multiplier)
simplifying by the way the saddle point structure of the problem. 
Two finite element discretizations have already been proposed for this formulation of the problem.
In the first one \cite{karper} the authors 
provide a unified approximation in the whole domain giving rise to a global 
$\mathbf{H}(\text{div})$-conforming 
scheme that is nonconforming in the fluid domain.
In the second one \cite{bernardi}, a mortar  discretization that combines the 
Bernardi-Raugel element and the lowest order Raviart-Thomas element is studied.

Our concern in this paper is to figure out how to combine any of the 
common mixed finite element methods for Darcy and Stokes problems to end up 
with a stable scheme for the coupled problem. We consider stable conforming 
Galerkin schemes in each subdomain and establish the rules under which these 
schemes can be matched to form a global convergent method. We will see that the only requirement 
for the viability of such a coupling is the existence of a discrete stable lifting of 
the normal trace in $\mathbf{H}(\text{div})$. We can prove that such a lifting is available
in the bi-dimensional case  for the Raviart-Thomas and the Brezzi-Douglas-Marini 
elements while, in the three-dimensional case, to obtain the same result, we need to assume that the triangulation 
is quasi-uniform in a neighborhood of the transmission boundary.  

The paper is organized as follows. In Section \ref{sec:2} we decribe  the governing equations, derive a mixed variational 
formulation of the problem and show that it is well-posed. Our Galerkin scheme is introduced in Section \ref{sec:3}  
by means of abstract finite elements spaces in each subdomain. We also establish in this section the conditions guarantying 
the stability of our approximation method. In Section \ref{sec:4}, we discuss the approximation properties of the 
discrete space for the global  velocity field and provide our main convergence result. The key hypotheses providing 
the convergence of our numerical scheme are discussed in Sections \ref{sec:5} and \ref{sec:6}. We show that they are satisfied 
for the most common  mixed finite elements  for Darcy and Stokes problems under mild conditions on the family of finite 
element triangulations. In Section \ref{sec:7} we provide some examples obtained by combining well-known finite elements for 
the Stokes problem, such as the MINI element and the Bernardi-Raugel element, with the Raviart-Thomas and the 
Brezzi-Douglas-Marini elements in order to obtain a global scheme for the coupled Darcy-Stokes problem.  
Finally, in Section \ref{sec:8}, we present  a set of numerical experiments that confirm
the converge rate predicted by the theory of the examples described in the former section.

\paragraph{Notation and background.}

Boldface fonts will be used to denote vectors  and vector valued functions. Also, if $H$ 
is a vector space of scalar functions, 
$\mathbf H$ will denote the space of $\mathbb R^d$ valued functions whose components are in $H$, 
endowed with the product norm. 

Given an integer $m\geq 1$ and a bounded Lipschitz domain $\mathcal O\subset \mathbb R^d$, $(d=2,3)$,  
we denote by $\|\cdot \|_{H^m(\mathcal O)}$ the norm in the usual Sobolev space 
$H^m(\mathcal O)$, cf. \cite{AdaFou03}. For economy of notation, $(\cdot,\cdot)_{\mathcal O}$ stands for the 
inner product in $L^2(\mathcal{O})$ and $\|\cdot \|_{\mathcal{O}}$ is the corresponding norm. 
We recall that $H^{1/2}(\partial \mathcal{O})$ represents the image of $H^1(\mathcal{O})$ by the trace operator. 
Its dual with respect to the pivot space $L^2(\partial \mathcal{O})$ is denoted $H^{-1/2}(\partial \mathcal{O})$. 
If $\Sigma$ is a part of the Lipschitz boundary $\partial \mathcal O$, 
we denote by $H_{00}^{1/2}(\Sigma)$ the space of functions from $H^{1/2}(\Sigma)$ 
whose extension by zero to the whole $\partial \mathcal{O}$ belongs to $H^{1/2}(\partial \mathcal{O})$. 
We will consider the dual $H_{00}^{-1/2}(\Sigma)$ of $H_{00}^{1/2}(\Sigma)$ with respect to the pivot space 
$L^2(\Sigma)$ and the angled bracket $\langle\cdot,\cdot\rangle_\Sigma$ will be used for the $L^2(\Sigma)$ inner 
product and its extension as the duality  product between $H_{00}^{-1/2}(\Sigma)$ and $H_{00}^{1/2}(\Sigma)$. 
For definition and basic properties of the space 
$\mathbf H(\mathrm{div},\mathcal O)$, we refer to \cite{GirRav86}. We will denote by 
$\mathbf H_{0}(\mathrm{div}, \mathcal O)$ the subspace of fields from $\mathbf H(\mathrm{div}, \mathcal O)$
with zero normal trace on the boundary $\partial \mathcal O$.

Given an open set $\mathcal O$ or the closure of an open set, $\mathbb P_k(\mathcal O)$ will denote 
the space of $d-$variate polynomials 
of degree not greater than $k$. The same notation will be applied for polynomials defined on flat 
$(d-1)$-dimensional open manifolds.
Finally, at the discrete level, the letter $h$ (with or without geometric meaning) will be used to 
denote discretization. The expression 
$a\lesssim b$ will be used to mean that there exists $C>0$ independent of $h$ such that $a\le C\,b$ for 
all $h$.

\section{Variational formulation}\label{sec:2}

The geometric layout of our problem is as follows: a domain $\Omega\subset \mathbb
R^d$ ($d=2$ or $d=3$) with polyhedral Lipschitz boundary is
subdivided into two subdomains by a Lipschitz polyhedral interface
$\Sigma$. The subdomains are denoted $\Omega_{\mathrm S}$ and
$\Omega_{\mathrm D}$ (S stands for Stokes and D for Darcy). We also
denote
\[
\Gamma_{\mathrm S}:=\partial\Omega_{\mathrm S}\setminus\Sigma \qquad
\Gamma_{\mathrm D}:=\partial\Omega_{\mathrm D}\setminus\Sigma.
\]
The normal vector field $\boldsymbol\nu$ on $\partial\Omega$ is
chosen to point outwards. We also denote by $\boldsymbol\nu$ the normal vector on $\Sigma$ 
that points from $\Omega_{\mathrm S}$ to $\Omega_{\mathrm D}$. We will add a
technical assumption on $\Sigma$ later on.

The strong form of the Stokes-Darcy system, as we will consider it here consists of the Stokes equations in $\Omega_{\mathrm S}$
\begin{eqnarray}\label{eq:0.a}
-2\nu\textbf{div}(\boldsymbol\varepsilon(\mathbf u_{\mathrm S}))+\nabla p_{\mathrm S}=\mathbf f_{\mathrm S} & & \mbox{in $\Omega_{\mathrm S}$},\\
\label{eq:0.b}
\mathrm{div}\,\mathbf u_{\mathrm S} = 0& & \mbox{in $\Omega_{\mathrm S}$},\\
\label{eq:0.c}
\mathbf u_{\mathrm S}=\mathbf 0 & & \mbox{on $\Gamma_{\mathrm S}$},
\end{eqnarray}
where $\boldsymbol\varepsilon(\mathbf u_{\mathrm S}):=\frac12(\boldsymbol\nabla \mathbf u_{\mathrm S}+(\boldsymbol\nabla \mathbf u_{\mathrm S})^\top)$, the Darcy equations in $\Omega_{\mathrm D}$,
\begin{eqnarray*}
\mathbf K^{-1}\mathbf u_{\mathrm D}+\nabla p_{\mathrm D}=\mathbf 0 & & \mbox{in $\Omega_{\mathrm D}$},\\
\mathrm{div}\,\mathbf u_{\mathrm D}= f_{\mathrm D}& & \mbox{in $\Omega_{\mathrm D}$},\\
\mathbf u_{\mathrm D}\cdot\boldsymbol\nu = 0 & & \mbox{on $\Gamma_{\mathrm D}$},\\
\int_{\Omega_{\mathrm D}}p_{\mathrm D}=0,
\end{eqnarray*}
and the coupling conditions
\begin{eqnarray}\label{eq:0.1}
\mathbf u_{\mathrm S}\cdot\boldsymbol\nu = \mathbf u_{\mathrm D}\cdot\boldsymbol\nu & & \mbox{on $\Sigma$},\\
\label{eq:0.2}
2\nu \boldsymbol\varepsilon(\mathbf u_{\mathrm S})\boldsymbol\nu - p_{\mathrm S}\boldsymbol\nu + \nu\kappa^{-1} \boldsymbol\pi_t 
\mathbf u_{\mathrm S} = -p_{\mathrm D}\boldsymbol\nu & & \mbox{on $\Sigma$},
\end{eqnarray}
where $\boldsymbol\pi_t \mathbf w:=\mathbf w-(\mathbf w\cdot\boldsymbol\nu)\boldsymbol\nu$.
The coupling conditions encode mass conservation, balance of normal forces and the Beavers-Joseph-Saffman condition. 
The matrix valued function $\mathbf K$ is assumed to be componentwise in $L^\infty(\Omega_{\mathrm D})$, symmetric 
and uniformly positive definite, so that $\mathbf K^{-1}$ is componentwise in $L^\infty(\Omega_{\mathrm D})$. 
The coefficient $\kappa\in L^\infty(\Sigma)$ is assumed to be  bounded from below by a positive constant 
a.e. on $\Sigma$. Because of the mass conservation 
condition across $\Sigma$, the homogeneous Dirichlet boundary condition for $\mathbf u_{\mathrm S}$ on $\Gamma_{\mathrm S}$ 
and the incompressibility condition in the Stokes domain, we can easily show that
\begin{equation}\label{eq:0}
\int_{\Omega_{\mathrm D}} f_{\mathrm D}=0
\end{equation}
is a necessary condition for existence of solution.

We set $\mathbf{H}^1(\Omega_{\mathrm{S}}):= H^1(\Omega_{\mathrm{S}})^d$ and consider the Sobolev spaces
\begin{eqnarray}\label{eq:1.10}
\mathbf H^1_{\mathrm S}(\Omega_{\mathrm S}) &:=& \{ \mathbf u \in
\mathbf{H}^1(\Omega_{\mathrm{S}}) \,:\, \mathbf u=\mathbf 0 \quad \mbox{on
$\Gamma_{\mathrm S}$}\}\\
\label{eq:1.11} \mathbf H_{\mathrm D}(\mathrm{div}, \Omega_{\mathrm
D}) &:=& \{ \mathbf u \in \mathbf H(\mathrm{div},\Omega_{\mathrm
D})\,:\, \mathbf u \cdot \boldsymbol\nu =0 \quad \mbox{on
$\Gamma_{\mathrm D}$}\},
\end{eqnarray}
endowed with their natural norms $\|\cdot\|_{\mathbf{H}^1(\Omega_{\mathrm{S}})}$ and 
$\|\cdot\|_{\mathbf H(\mathrm{div}, \Omega_{\mathrm D})}$ (cf. \cite{AdaFou03},
\cite{GirRav86}). We recall that the normal trace operator $\mathbf{u}\mapsto \mathbf{u}|_\Sigma \cdot \boldsymbol{\nu}$ 
is bounded from $\mathbf H_{\mathrm D}(\mathrm{div}, \Omega_{\mathrm D})$ onto $H_{00}^{-1/2}(\Sigma)$. 

We introduce the space for the velocity field
\begin{equation}\label{strong}
\mathbb X:=\{ \mathbf u=(\mathbf u_{\mathrm S},\mathbf u_{\mathrm
D})\in \mathbf H^1_{\mathrm S}(\Omega_{\mathrm S}) \times \mathbf
H_{\mathrm D}(\mathrm{div},\Omega_{\mathrm D})\,:\, \mathbf
u_{\mathrm S}\cdot\boldsymbol\nu=\mathbf u_{\mathrm
D}\cdot\boldsymbol\nu \quad \mbox{on $\Sigma$}\} \subset \mathbf
H_0(\mathrm{div},\Omega),
\end{equation}
and endow it with the product norm
\[
 \| \mathbf u \|_{\mathbb{X}} := \left( \| \mathbf u_{\mathrm S} \|_{\mathbf{H}^1(\Omega_{\mathrm{S}})}^2 +  
\| \mathbf u_{\mathrm D} \|_{\mathbf H(\mathrm{div}, \Omega_{\mathrm D})}^2 \right)^{1/2}, \quad 
\forall (\mathbf u_{\mathrm S},\mathbf u_{\mathrm D})\in \mathbb{X}. 
\]

The space for the pressure field is
\[
\mathbb Q:= L^2_0(\Omega_{\mathrm S}) \times L^2_0(\Omega_{\mathrm
D}) \times \mathbb R=:L^2_\star(\Omega) \times \mathbb R,
\]
where
\[
L^2_0(\mathcal O):= \{ p \in L^2(\mathcal O)\,:\, (p,1)_{\mathcal
O}=0\}.
\]
The pressure field is represented as $(p_{\mathrm S}, p_{\mathrm D},
\delta)$, where $p_{\mathrm S}+ \delta$ corresponds to the pressure
in the Stokes domain and normalization has been applied to have zero
integral of pressure over $\Omega_{\mathrm D}$. If we want to impose
zero integral of the pressure field over $\Omega$, we can easily
correct the result by adding a constant in a postprocessing step.
The space $\mathbb Q$ is endowed with the corresponding
product norm.

We next define two bounded bilinear forms for the weak formulation
of the Darcy-Stokes problem. We first consider $a: \mathbb X \times \mathbb X
\to \mathbb R$, given by
\[
a(\mathbf u,\mathbf v):= 2\nu(\boldsymbol\varepsilon(\mathbf
u_{\mathrm S}),\boldsymbol\varepsilon(\mathbf v_{\mathrm
S}))_{\Omega_{\mathrm S}}+\nu \langle\kappa^{-1}
\boldsymbol\pi_t\mathbf u_{\mathrm S},\boldsymbol\pi_t\mathbf
v_{\mathrm S}\rangle_\Sigma + (\mathbf K^{-1} \mathbf u_{\mathrm
D},\mathbf v_{\mathrm D})_{\Omega_{\mathrm D}}.
\]
We also consider $b:\mathbb X \times \mathbb Q \to \mathbb
R$ given by
\begin{equation}\label{eq:1.0a}
b(\mathbf u,(q,\delta)):= (\mathrm{div}\,\mathbf
u,q)_{\Omega_{\mathrm S}\cup \Omega_{\mathrm D}}+\delta \langle
\mathbf u_{\mathrm S}\cdot\boldsymbol\nu,1\rangle_\Sigma.
\end{equation}
For analytical purposes, this bilinear form can be considered as the
sum of two bilinear forms $b_1:\mathbb X \times L^2_\star(\Omega)
\to \mathbb R$ and $b_2:\mathbb X \times \mathbb R\to \mathbb R$:
\begin{equation}\label{eq:1.0b}
b_1(\mathbf u,q):=(\mathrm{div}\,\mathbf u,q)_{\Omega_{\mathrm S}\cup
\Omega_{\mathrm D}}, \qquad b_2(\mathbf
u,\delta):=\delta\langle\mathbf u_{\mathrm
S}\cdot\boldsymbol\nu,1\rangle_\Sigma.
\end{equation}
A variational form of the Darcy-Stokes problem 
looks for $(\mathbf u,(p,\delta)) \in \mathbb X \times \mathbb Q$
such that
\begin{equation}\label{eq:1.vp}
\begin{array}{rll}
a(\mathbf u,\mathbf v) - b(\mathbf v,(p,\delta)) &=(\mathbf
f_{\mathrm S},\mathbf v_{\mathrm S})_{\Omega_{\mathrm S}} & \forall
\mathbf v \in \mathbb X,\\[1.5ex]
b(\mathbf u,(q,\rho)) &=(f_{\mathrm D},q_{\mathrm
D})_{\Omega_{\mathrm D}} & \forall (q,\rho)\in \mathbb Q.
\end{array}
\end{equation}

\paragraph{Terminology.}
For economy of expression, given a bounded bilinear form $c:\mathbb
S\times \mathbb T \to \mathbb R$ (here $\mathbb S$ and $\mathbb T$
are Hilbert spaces), we will denote
\[
\mathrm{ker}\,c:=\{ s \in \mathbb S\,:\, c(s,t)=0 \quad \forall t
\in \mathbb T\}.
\]
(The set is actually the kernel of the operator $\mathbb S \ni
s\mapsto c(s,\cdot)\in \mathbb T^*$, which will remain unnamed.) We
will also say that $c$ satisfies the inf-sup condition for the pair
$\{ \mathbb S, \mathbb T\}$, whenever there exists $\beta>0$ such
that
\begin{equation}\label{eq:1.1}
\sup_{0\neq s \in \mathbb S}\frac{c(s,t)}{\|s\|_{\mathbb S}}\ge
\beta \| t\|_{\mathbb T}\qquad \forall t \in \mathbb T.
\end{equation}
This property is equivalent to surjectivity of the operator $s
\mapsto c(s,\punto)$ and implies the existence of a bounded
right-inverse of this operator. Actually, given any $\xi\in \mathbb
T^*$ we can find $s \in \mathbb S$ such that
\begin{equation}\label{eq:1.2}
c(s,t)=\xi(t) \quad \forall t \in \mathbb T \qquad \| s \|_{\mathbb
S}\le \beta^{-1} \|\xi\|_{\mathbb T^*}.
\end{equation}
The constant $\beta$ in \eqref{eq:1.1} and \eqref{eq:1.2} can be taken to be
the same and the map $\xi \mapsto s$ is linear.

\begin{proposition}\label{prop:1.1} For the bilinear forms given in
\eqref{eq:1.0a}--\eqref{eq:1.0b} it holds that
\begin{eqnarray*}
\mathrm{ker}\, b_1 &=& \{ \mathbf u \in \mathbb X\,:\,
\mathrm{div}\, \mathbf u \in \mathbb P_0(\Omega_{\mathrm S}) \times
\mathbb P_0(\Omega_{\mathrm D})\}\\
&=& \{ \mathbf u \in \mathbb X\,:\, \mathrm{div}\, \mathbf u \in
\mathrm{span}\{ |\Omega_{\mathrm S}|^{-1}\mathbf{1}_{\Omega_{\mathrm S}}-
|\Omega_{\mathrm D}|^{-1}\mathbf{1}_{\Omega_{\mathrm D}}\}\,\}\\
\mathrm{ker}\, b &=& \{ \mathbf u\in \mathbb X\,:\,
\mathrm{div}\, \mathbf u=0\},
\end{eqnarray*}
where $\mathbf{1}_{\Omega_{\mathrm D}}$ and $\mathbf{1}_{\Omega_{\mathrm S}}$ are 
characteristic functions of $\Omega_{\mathrm D}$ and $\Omega_{\mathrm S}$ respectively. 
\end{proposition}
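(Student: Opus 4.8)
The plan is to identify $\mathrm{ker}\,b_1$ first and then deduce $\mathrm{ker}\,b$ from it. To compute $\mathrm{ker}\,b_1$, I would observe that
$b_1(\mathbf u,q)=(\mathrm{div}\,\mathbf u_{\mathrm S},q_{\mathrm S})_{\Omega_{\mathrm S}}+(\mathrm{div}\,\mathbf u_{\mathrm D},q_{\mathrm D})_{\Omega_{\mathrm D}}$ splits over the two subdomains, and the test components $q_{\mathrm S}\in L^2_0(\Omega_{\mathrm S})$ and $q_{\mathrm D}\in L^2_0(\Omega_{\mathrm D})$ vary independently. Hence $b_1(\mathbf u,\cdot)=0$ is equivalent to requiring $\mathrm{div}\,\mathbf u_{\mathrm S}$ to be $L^2(\Omega_{\mathrm S})$-orthogonal to every mean-zero function, and likewise for $\mathrm{div}\,\mathbf u_{\mathrm D}$. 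Since the orthogonal complement of $L^2_0(\mathcal O)$ in $L^2(\mathcal O)$ is exactly the constants, this yields $\mathrm{div}\,\mathbf u_{\mathrm S}\in\mathbb P_0(\Omega_{\mathrm S})$ and $\mathrm{div}\,\mathbf u_{\mathrm D}\in\mathbb P_0(\Omega_{\mathrm D})$, which is the first displayed equality.

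The second characterization then rests on the embedding $\mathbb X\subset\mathbf H_0(\mathrm{div},\Omega)$ recorded in \eqref{strong}: for any $\mathbf u\in\mathbb X$ the global field has vanishing normal trace on $\partial\Omega$, so Green's formula gives $\int_\Omega\mathrm{div}\,\mathbf u=0$. Writing $\mathrm{div}\,\mathbf u=c_{\mathrm S}\mathbf{1}_{\Omega_{\mathrm S}}+c_{\mathrm D}\mathbf{1}_{\Omega_{\mathrm D}}$, this forces $c_{\mathrm S}|\Omega_{\mathrm S}|+c_{\mathrm D}|\Omega_{\mathrm D}|=0$, so $(c_{\mathrm S},c_{\mathrm D})$ ranges over a one-dimensional space. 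Verifying that the proposed generator $|\Omega_{\mathrm S}|^{-1}\mathbf{1}_{\Omega_{\mathrm S}}-|\Omega_{\mathrm D}|^{-1}\mathbf{1}_{\Omega_{\mathrm D}}$ has zero integral over $\Omega$ confirms that it spans this space, giving the second equality. The mild point is that the normal trace of $\mathbf u_{\mathrm D}$ lives only in $H_{00}^{-1/2}(\Sigma)$, so the divergence theorem must be read as the $\mathbf H(\mathrm{div})$-Green formula; the pairing against the constant $1$ is nevertheless well defined.

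For $\mathrm{ker}\,b$ I would exploit the splitting $b=b_1+b_2$. Testing with $\delta=0$ shows $\mathrm{ker}\,b\subset\mathrm{ker}\,b_1$, so any $\mathbf u\in\mathrm{ker}\,b$ satisfies $\mathrm{div}\,\mathbf u=t\,(|\Omega_{\mathrm S}|^{-1}\mathbf{1}_{\Omega_{\mathrm S}}-|\Omega_{\mathrm D}|^{-1}\mathbf{1}_{\Omega_{\mathrm D}})$ for some $t\in\mathbb R$, while testing with $q=0$ supplies the extra condition $\langle\mathbf u_{\mathrm S}\cdot\boldsymbol\nu,1\rangle_\Sigma=0$. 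To close the argument I would relate this boundary integral to $t$ by applying Green's formula on $\Omega_{\mathrm S}$ alone: since $\mathbf u_{\mathrm S}=\mathbf 0$ on $\Gamma_{\mathrm S}$ and $\boldsymbol\nu$ is the outward normal of $\Omega_{\mathrm S}$ along $\Sigma$, one obtains $\langle\mathbf u_{\mathrm S}\cdot\boldsymbol\nu,1\rangle_\Sigma=\int_{\Omega_{\mathrm S}}\mathrm{div}\,\mathbf u_{\mathrm S}=t$. The condition therefore forces $t=0$, that is $\mathrm{div}\,\mathbf u=0$, and the reverse inclusion is immediate since a divergence-free field annihilates both $b_1$ and (via the same identity, now reading $t=0$) $b_2$. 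I expect the only delicate step to be the bookkeeping of orientations and trace spaces in these two uses of Green's formula; the algebra itself is routine.
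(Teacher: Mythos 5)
Your proof is correct and follows essentially the same route as the paper's: the paper's two stated ingredients---that the orthogonal complement of $L^2_0(\mathcal O)$ in $L^2(\mathcal O)$ consists of the constants, and the Green's formula identity $(\mathrm{div}\,\mathbf u_{\mathrm S},1)_{\Omega_{\mathrm S}}=\langle\mathbf u\cdot\boldsymbol\nu,1\rangle_\Sigma=-(\mathrm{div}\,\mathbf u_{\mathrm D},1)_{\Omega_{\mathrm D}}$---are exactly what you use, only with the routine details written out. Your derivation of the second equality via $\int_\Omega\mathrm{div}\,\mathbf u=0$ for $\mathbf u\in\mathbf H_0(\mathrm{div},\Omega)$ is just the sum of the paper's two subdomain formulas, and you invoke the subdomain version itself for the $\mathrm{ker}\,b$ part, so the arguments coincide.
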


\begin{proof}
Note that
\[
(\mathrm{div}\,\mathbf u_\circ,q_\circ)_{\Omega_\circ}=0\quad
\forall q\in L^2_0(\Omega_\circ) \qquad \Longleftrightarrow \qquad
\mathrm{div}\,\mathbf u_\circ \in \mathbb P_0(\Omega_\circ) \qquad
\circ \in \{\mathrm S,\mathrm D\}.
\]
Also, for an element $\mathbf u\in \mathbb X$
\[
(\mathrm{div}\,\mathbf u_{\mathrm S},1)_{\Omega_{\mathrm
S}}=\langle\mathbf u\cdot\boldsymbol\nu,1\rangle_\Sigma =
-(\mathrm{div}\,\mathbf u_{\mathrm D},1)_{\Omega_{\mathrm D}}.
\]
The proof of the statement is straightforward with help of these two
results.
\end{proof}

\begin{proposition}\label{prop:1.2} The bilinear form $b$ satisfies
the inf-sup condition for the pair $\{\mathbb X,\mathbb Q\}$.
\end{proposition}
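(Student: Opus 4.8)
The plan is to verify the inf-sup condition \eqref{eq:1.1} for $\{\mathbb X,\mathbb Q\}$ constructively: for each datum $(q,\delta)\in\mathbb Q=L^2_\star(\Omega)\times\mathbb R$ I will exhibit an explicit $\mathbf u\in\mathbb X$ realizing the supremum up to a constant. I exploit the splitting $b=b_1+b_2$ of \eqref{eq:1.0b}, whose two components test against $q\in L^2_\star(\Omega)$ and against $\delta\in\mathbb R$ separately, and I build $\mathbf u$ as a sum of two pieces, each tailored to one component and arranged so that the unwanted cross terms vanish.

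First step ($b_1$). Given $q=(q_{\mathrm S},q_{\mathrm D})\in L^2_0(\Omega_{\mathrm S})\times L^2_0(\Omega_{\mathrm D})$, I construct $\mathbf u_1\in\mathbb X$ with $\mathrm{div}\,\mathbf u_{1,\mathrm S}=q_{\mathrm S}$, $\mathrm{div}\,\mathbf u_{1,\mathrm D}=q_{\mathrm D}$ and vanishing normal trace on $\Sigma$. Since $q_{\mathrm S}$ and $q_{\mathrm D}$ have zero mean on their respective subdomains, the classical surjectivity of the divergence lets me choose $\mathbf u_{1,\mathrm S}\in\mathbf H^1_0(\Omega_{\mathrm S})$ and $\mathbf u_{1,\mathrm D}\in\mathbf H_0(\mathrm{div},\Omega_{\mathrm D})$ solving these equations with $\|\mathbf u_{1,\mathrm S}\|_{\mathbf H^1(\Omega_{\mathrm S})}\lesssim\|q_{\mathrm S}\|_{\Omega_{\mathrm S}}$ and $\|\mathbf u_{1,\mathrm D}\|_{\mathbf H(\mathrm{div},\Omega_{\mathrm D})}\lesssim\|q_{\mathrm D}\|_{\Omega_{\mathrm D}}$. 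As both fields vanish (respectively have vanishing normal trace) on the whole of $\partial\Omega_{\mathrm S}$ and $\partial\Omega_{\mathrm D}$, we obtain $\mathbf u_1\in\mathbb X$, $b_1(\mathbf u_1,q)=\|q\|^2_{L^2_\star(\Omega)}$, and crucially $b_2(\mathbf u_1,\delta)=\delta\langle\mathbf u_{1,\mathrm S}\cdot\boldsymbol\nu,1\rangle_\Sigma=0$.

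Second step ($b_2$ on $\mathrm{ker}\,b_1$). I fix once and for all a field $\mathbf v^*\in\mathrm{ker}\,b_1\subset\mathbb X$ with unit flux $\langle\mathbf v^*_{\mathrm S}\cdot\boldsymbol\nu,1\rangle_\Sigma=1$. I pick $g\in H_{00}^{1/2}(\Sigma)$ with $\int_\Sigma g=1$. On the Darcy side I solve for $\mathbf v^*_{\mathrm D}\in\mathbf H_{\mathrm D}(\mathrm{div},\Omega_{\mathrm D})$ with constant divergence $-|\Omega_{\mathrm D}|^{-1}$ and normal trace $g$ on $\Sigma$; on the Stokes side I lift $g$ to a boundary datum that vanishes on $\Gamma_{\mathrm S}$, extend it into $\mathbf H^1(\Omega_{\mathrm S})$, and correct its divergence to the constant $|\Omega_{\mathrm S}|^{-1}$ by adding a field of $\mathbf H^1_0(\Omega_{\mathrm S})$ (again by divergence surjectivity, the required zero-mean correction being compatible). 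The two pieces share the normal trace $g$ on $\Sigma$ and have constant divergence, so $\mathbf v^*\in\mathrm{ker}\,b_1$ by Proposition \ref{prop:1.1}, with the prescribed unit flux. The solvability of the two auxiliary problems follows from the divergence theorem together with the orientation of $\boldsymbol\nu$ on $\Sigma$ (outward for $\Omega_{\mathrm S}$, inward for $\Omega_{\mathrm D}$), which makes the zero-mean conditions match.

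Finally I combine the pieces: set $\mathbf u:=\mathbf u_1+\delta\,\mathbf v^*$. Since $\mathbf v^*\in\mathrm{ker}\,b_1$ and $\mathbf u_1$ carries zero $\Sigma$-flux, the cross terms cancel and $b(\mathbf u,(q,\delta))=\|q\|^2_{L^2_\star(\Omega)}+\delta^2=\|(q,\delta)\|^2_{\mathbb Q}$, while $\|\mathbf u\|_{\mathbb X}\le\|\mathbf u_1\|_{\mathbb X}+|\delta|\,\|\mathbf v^*\|_{\mathbb X}\lesssim\|(q,\delta)\|_{\mathbb Q}$; dividing gives \eqref{eq:1.1}. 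The routine ingredients are the two invocations of divergence surjectivity. The step I expect to be the real obstacle is the construction of $\mathbf v^*$, since it requires simultaneously prescribing a constant divergence and a matching interface normal trace while respecting the essential conditions on $\Gamma_{\mathrm S}$ and $\Gamma_{\mathrm D}$; this is where the compatibility (zero-mean) relations and the choice $g\in H_{00}^{1/2}(\Sigma)$, so that its extension by zero is admissible, must be checked with care.
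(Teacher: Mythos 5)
Your proof is correct, but it is assembled differently from the paper's. Both arguments use the splitting $b=b_1+b_2$ of \eqref{eq:1.0b} and the same classical ingredients: surjectivity of the divergence from $\mathbf H^1_0(\Omega_{\mathrm S})$ onto $L^2_0(\Omega_{\mathrm S})$ and from $\mathbf H_0(\mathrm{div},\Omega_{\mathrm D})$ onto $L^2_0(\Omega_{\mathrm D})$. The paper, however, never builds a right inverse of $b$ by hand: it checks the two separate inf-sup conditions (for $b_1$ this is exactly your first step; for $b_2$ it only needs \emph{some} $\mathbf v\in\mathbb X$ with $\langle\mathbf v\cdot\boldsymbol\nu,1\rangle_\Sigma>0$, which it takes in $\mathbf H^1_0(\Omega)$ with no constraint on its divergence), proves the decomposition $\mathbb X=\mathrm{ker}\,b_1+\mathrm{ker}\,b_2$ (again via divergence surjectivity in $\mathbb X_0$), and then invokes the abstract characterization of inf-sup conditions on product spaces from \cite[Theorem 2]{GatSay08}. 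You instead diagonalize by hand: your $\mathbf u_1\in\mathbf H^1_0(\Omega_{\mathrm S})\times\mathbf H_0(\mathrm{div},\Omega_{\mathrm D})\subset\mathrm{ker}\,b_2$ kills the $b_2$ cross term, while your flux-carrying field $\mathbf v^*$ is forced into $\mathrm{ker}\,b_1$ by prescribing the constant divergences $|\Omega_{\mathrm S}|^{-1}$ and $-|\Omega_{\mathrm D}|^{-1}$ (precisely the generator identified in Proposition \ref{prop:1.1}), so that $\mathbf u_1+\delta\,\mathbf v^*$ is an explicit bounded right inverse realizing \eqref{eq:1.2}. Your route buys self-containedness and an explicit constant; its cost is exactly the construction of $\mathbf v^*$, which the paper avoids because the kernel-sum decomposition performs the orthogonalization abstractly. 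One technical point in your construction deserves care: on a genuinely polyhedral, non-flat interface the normal field $\boldsymbol\nu$ jumps across the edges of $\Sigma$, so for a generic $g\in H^{1/2}_{00}(\Sigma)$ the vector datum $g\boldsymbol\nu$ need not be an admissible $\mathbf H^{1/2}$ trace; this is easily repaired by taking $g$ smooth and supported in the interior of a single face of $\Sigma$, normalized so that $\int_\Sigma g=1$, after which both the Stokes trace lifting and the Darcy Neumann problem (whose compatibility conditions you verified with the correct signs for the orientation of $\boldsymbol\nu$) go through.
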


\begin{proof}
Let
\[
\mathbb X_0:=\mathbf H^1_0(\Omega_{\mathrm S}) \times \mathbf
H_0(\mathrm{div},\Omega_{\mathrm D}) \subset \mathbb X.
\]
Note first that $b_1$ satisfies the inf-sup condition for the pair
$\{ \mathbb X_0,L^2_\star(\Omega)$\}, since this is equivalent to
separate and well-known inf-sup conditions in $\Omega_{\mathrm S}$
and $\Omega_{\mathrm D}$. Therefore $b_1$ satisfies the inf-sup
condition for the pair $\{ \mathbb X, L^2_\star(\Omega)\}$. The
inf-sup condition of $b_2$ for the pair $\{\mathbb X,\mathbb R\}$ is
equivalent to the existence of a vector field $\mathbf v \in \mathbb
X$ such that
\begin{equation}\label{eq:1.3}
\langle \mathbf v \cdot\boldsymbol\nu,1\rangle_\Sigma > 0.
\end{equation}
In fact, it is possible to construct $\mathbf v\in \mathbf
H^1_0(\Omega)$ with this property. The proof of the global inf-sup
condition for $b$ uses then the characterization given in
\cite[Theorem 2]{GatSay08}. We need to show that $\mathbb
X=\mathrm{ker}\, b_1+\mathrm{ker}\, b_2.$ Given $\mathbf v\in \mathbb X$
we can choose $\mathbf w\in \mathbb X_0$ such that
\begin{equation}\label{eq:1.4}
(\mathrm{div}\,\mathbf w,q)_\Omega=(\mathrm{div}\,\mathbf
v,q)_\Omega \qquad \forall q\in L^2_\star(\Omega).
\end{equation}
Note that $\mathbf w =\mathbf 0$ on $\Sigma$ and therefore $\mathbf
w \in \mathrm{ker}\, b_2$ and by definition $\mathbf v-\mathbf w \in
\mathrm{ker}\,b_1$, which proves the result.
\end{proof}

\begin{proposition}\label{prop:1.3}
Problem \eqref{eq:1.vp} is well posed.
\end{proposition}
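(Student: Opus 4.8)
The plan is to recognize \eqref{eq:1.vp} as a saddle-point problem in the standard form $a(\mathbf u,\mathbf v)-b(\mathbf v,(p,\delta))=\ell(\mathbf v)$, $b(\mathbf u,(q,\rho))=g(q,\rho)$, and to apply the classical Babu\v{s}ka--Brezzi theory. That theory guarantees existence, uniqueness and continuous dependence on the data once we verify three things: boundedness of $a$ and $b$, the inf-sup condition for $b$ on $\{\mathbb X,\mathbb Q\}$, and coercivity of $a$ on $\mathrm{ker}\,b$. Boundedness is built into the definitions (the surface terms on $\Sigma$ are controlled by the continuity of the normal and tangential traces together with $\mathbf K^{-1},\kappa^{-1}\in L^\infty$), and the inf-sup condition is exactly Proposition \ref{prop:1.2}. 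So the entire argument reduces to the kernel-coercivity estimate.

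First I would use Proposition \ref{prop:1.1} to replace $\mathrm{ker}\,b$ by the explicit set $\{\mathbf u\in\mathbb X:\mathrm{div}\,\mathbf u=0\}$. On this set the quadratic form $a(\mathbf u,\mathbf u)$ splits into a Darcy contribution and a Stokes contribution that can be bounded independently. For the Darcy part, uniform positive-definiteness of $\mathbf K^{-1}$ gives $(\mathbf K^{-1}\mathbf u_{\mathrm D},\mathbf u_{\mathrm D})_{\Omega_{\mathrm D}}\gtrsim\|\mathbf u_{\mathrm D}\|_{\Omega_{\mathrm D}}^2$, and since $\mathrm{div}\,\mathbf u_{\mathrm D}=0$ on the kernel this already equals $\|\mathbf u_{\mathrm D}\|_{\mathbf H(\mathrm{div},\Omega_{\mathrm D})}^2$. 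For the Stokes part I would invoke Korn's inequality on $\mathbf H^1_{\mathrm S}(\Omega_{\mathrm S})$, which yields $\|\boldsymbol\varepsilon(\mathbf u_{\mathrm S})\|_{\Omega_{\mathrm S}}^2\gtrsim\|\mathbf u_{\mathrm S}\|_{\mathbf H^1(\Omega_{\mathrm S})}^2$. Discarding the nonnegative tangential penalty on $\Sigma$ and adding the two bounds produces $a(\mathbf u,\mathbf u)\gtrsim\|\mathbf u\|_{\mathbb X}^2$ on $\mathrm{ker}\,b$, completing the hypotheses of the abstract theorem.

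The hard part will be the Korn inequality, and in particular making sure its geometric prerequisites are met. Coercivity of the symmetric-gradient form over $\mathbf H^1_{\mathrm S}(\Omega_{\mathrm S})$ holds only because every function in this space vanishes on $\Gamma_{\mathrm S}$, so that the sole rigid-body motion it can contain is zero; this tacitly needs $\Gamma_{\mathrm S}$ to carry positive surface measure, which is presumably the technical assumption on $\Sigma$ announced earlier. It is worth noting that $a$ is \emph{not} coercive on all of $\mathbb X$ (nothing in $a$ controls $\mathrm{div}\,\mathbf u_{\mathrm D}$ off the kernel), so the restriction to $\mathrm{ker}\,b$ and the full Brezzi machinery are genuinely needed rather than a mere convenience. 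If one wished to allow $\partial\Omega_{\mathrm S}=\Sigma$, the rigid motions would instead have to be pinned down through the tangential trace term $\nu\langle\kappa^{-1}\boldsymbol\pi_t\mathbf u_{\mathrm S},\boldsymbol\pi_t\mathbf u_{\mathrm S}\rangle_\Sigma$ combined with the interface constraint on the normal trace---a more involved variant of Korn's inequality---but under the standing assumption the classical statement suffices.
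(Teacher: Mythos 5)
Your proposal is correct and follows essentially the same route as the paper: Brezzi's saddle-point theory with the inf-sup condition supplied by Proposition \ref{prop:1.2}, and coercivity of $a$ on $\mathrm{ker}\,b$ obtained from the kernel characterization of Proposition \ref{prop:1.1} together with Korn's inequality (the paper states this tersely as coercivity on the larger set \eqref{eq:1.5}, which your Darcy/Stokes splitting simply makes explicit). Your added remarks --- that the divergence-free constraint upgrades the $L^2$ bound to the $\mathbf H(\mathrm{div})$ norm, and that Korn requires the Dirichlet condition on $\Gamma_{\mathrm S}$ --- are exactly the details the paper leaves implicit.
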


\begin{proof}
Sufficient conditions for well-posedness of mixed problems are (see
\cite[Chapter 2]{BreFor91}): the inf-sup condition of $b$ for
$\{\mathbb X;\mathbb Q\}$ (shown in Proposition \ref{prop:1.2}) and
coercivity of $a$ in $\mathrm{ker}\, b$
\[
a(\mathbf u,\mathbf u)\ge \alpha\|\mathbf u\|_{\mathbb X}^2 \qquad
\forall \mathbf u \in \mathrm{ker}\, b.
\]
(In fact, since $a$ is symmetric and positive definite, these
conditions are also necessary.) However, the bilinear form $a$ is
coercive in the set
\begin{equation}\label{eq:1.5}
\mathbf H^1_{\mathrm S}(\Omega_{\mathrm S}) \times \{ \mathbf v \in
\mathbf H(\mathrm{div},\Omega_{\mathrm D})\,:\,
\mathrm{div}\,\mathbf v=0\},
\end{equation}
by Korn's inequality. By Proposition \ref{prop:1.1}, this set
includes $\mathrm{ker}\,b$, which proves the result.
\end{proof}

\section{The discrete problem}\label{sec:3}
We assume that there are two
separate families of regular triangulations $\{\mathcal T^h_{\mathrm S}\}_h$ and $\{\mathcal
T^h_{\mathrm D}\}_h$  of $\overline{\Omega}_{\mathrm S}$ and $\overline{\Omega}_{\mathrm D}$
respectively. Each triangulation is composed of
triangles/tetrahedra with a diameter not greater than $h$. The triangulations create two inherited
partitions of $\Sigma$, respectively denoted $\Sigma^h_{\mathrm S}$
and $\Sigma^h_{\mathrm D}$. 
Let us consider finite dimensional subspaces of piecewise polynomial
functions (relatively to the given triangulations) to approximate velocity
and pressure in the Stokes domain
\[
\mathbf H^h(\Omega_{\mathrm S}) \subset \mathbf H^1(\Omega_{\mathrm
S}), \qquad  L^h_0(\Omega_{\mathrm S})\subset L^2_0(\Omega_{\mathrm
S}),\qquad L^h(\Omega_{\mathrm S})=L^h_0(\Omega_{\mathrm S})\oplus
\mathbb P_0(\Omega_{\mathrm S}),
\]
as well as in the Darcy domain
\[
\mathbf H^h(\Omega_{\mathrm D}) \subset \mathbf
H(\mathrm{div},\Omega_{\mathrm D}), \qquad  L^h_0(\Omega_{\mathrm
D})\subset L^2_0(\Omega_{\mathrm D}),\qquad L^h(\Omega_{\mathrm
D})=L^h_0(\Omega_{\mathrm D})\oplus \mathbb P_0(\Omega_{\mathrm D}).
\]
The spaces
\[
\mathbf H^h_0(\Omega_{\mathrm S}):= \mathbf H^h(\Omega_{\mathrm
S})\cap \mathbf H^1_0(\Omega_{\mathrm S}), \qquad \mathbf
H^h_0(\Omega_{\mathrm D}):=\mathbf H^h(\Omega_{\mathrm D}) \cap
\mathbf H_0(\mathrm{div},\Omega_{\mathrm D})
\]
are the corresponding subspaces that are considered when applying
the discretization method to problems with homogeneous boundary
conditions on the entire boundary of each subdomain.
We also need to consider the spaces (recall \eqref{eq:1.10} and
\eqref{eq:1.11})
\begin{equation}\label{eq:3.4}
\mathbf H^h_{\mathrm S}(\Omega_{\mathrm S}):= \mathbf
H^h(\Omega_{\mathrm S}) \cap \mathbf H^1_{\mathrm S}(\Omega_{\mathrm
S}), \qquad \mathbf H^h_{\mathrm D}(\Omega_{\mathrm D}):= \mathbf
H^h(\Omega_{\mathrm D}) \cap \mathbf H_{\mathrm
D}(\mathrm{div},\Omega_{\mathrm D}),
\end{equation}
as well as the discrete spaces of normal components on $\Sigma$, namely, 
\[
\Phi^h_\circ:= \{ \mathbf u_h\cdot \boldsymbol\nu \,:\, \mathbf u_h
\in \mathbf H^h_\circ(\Omega_\circ)\}\subset L^2(\Sigma) \qquad
\circ\in \{\mathrm S,\mathrm D\}.
\]
In all what follows, we assume that $\Phi^h_{\mathrm D}$ contains at least the space of piecewise constant 
functions on $\Sigma^h_{\mathrm{D}}$, i.e.,
\begin{equation}\label{eq:6.1}
\mathbb P_0(\Sigma^h_{\mathrm{D}}):=\{ \phi_h :\Sigma \to \mathbb R\,:\, \phi_h|_e \in \mathbb P_0(e) 
\quad \forall e\in \Sigma^h_{\mathrm{D}}\}\subset \Phi^h_{\mathrm D}.
\end{equation}
We denote by $R^h_D$ the $L^2(\Sigma)$-orthogonal projection onto $\Phi^h_{\mathrm D}$.

In the forthcoming arguments, we will use the idea of a 
uniformly bounded right inverse of the normal trace $\mathbf H^h(\Omega_{\mathrm D})\ni \mathbf v_h 
\mapsto \mathbf v_h\cdot\boldsymbol\nu \in \Phi^h_{\mathrm D}$, i.e., a linear 
operator $\mathbf L^h: \Phi^h_{\mathrm D}\to \mathbf H^h_{\mathrm D}(\Omega_{\mathrm D})$ such that
\[
 \| \mathbf{L}^h \phi^h\|_{\mathbf{H}(\mathrm{div},\Omega_{\mathrm D})} \leq C \, \| \phi^h\|_{H_{00}^{-1/2}(\Sigma)}\quad \forall 
\phi^h\in \Phi^h_{\mathrm D}
\]
with a constant $C>0$ independent of $h$ and 
\begin{equation}\label{eq:3.6}
(\mathbf L^h\phi^h) \cdot \boldsymbol\nu=\phi^h\qquad
\forall\phi^h\in \Phi^h_{\mathrm D}.
\end{equation}
We will refer to such an operator as a stable lifting of the normal trace in $\mathbf H^h(\Omega_{\mathrm D})$.

The method we are proposing is a Galerkin discretization of the
variational problem \eqref{eq:1.vp} using the spaces
\begin{eqnarray*}
\mathbb X^h &:=&  \{ \mathbf u_h \equiv (\mathbf
u^h_{\mathrm S},\mathbf u^h_{\mathrm D}) \in \mathbf H^h_{\mathrm
S}(\Omega_{\mathrm S})\times \mathbf H^h_{\mathrm D}(\Omega_{\mathrm
D})\,:\,  \mathbf u^h_{\mathrm D}\cdot\boldsymbol\nu = R^h_D(\mathbf u^h_{\mathrm S}\cdot\boldsymbol\nu)\,\, \text{on $\Sigma$}\},\\
\mathbb Q^h &:=& L^h_0(\Omega_{\mathrm S})\times
L^h_0(\Omega_{\mathrm D}) \times \mathbb R=:L^h_\star(\Omega) \times
\mathbb R,
\end{eqnarray*}
that is, we look for $(\mathbf u_h,(p_h,\delta_h))\in \mathbb X^h
\times \mathbb Q^h$ such that
\begin{equation}\label{eq:2.0}
\begin{array}{rll}
a(\mathbf u_h,\mathbf v_h)-b(\mathbf v_h,(p_h,\delta_h)&=(\mathbf
f_{\mathrm S},\mathbf v_h)_{\Omega_{\mathrm S}} & \forall \mathbf
v_h \in \mathbb X^h,\\[1.5ex]
b(\mathbf u_h,(q_h,\rho_h))&=(f_{\mathrm D}, q_h)_{\Omega_{\mathrm
D}} & \forall (q_h,\rho_h) \in \mathbb Q^h.
\end{array}
\end{equation}

\begin{remark}
Note that  $\mathbb X^h \not\subset \mathbb X$ unless $\Phi^h_{\mathrm S}\subset \Phi^h_{\mathrm D}$ 
in which case \eqref{eq:2.0} becomes a conforming Galerkin approximation  of \eqref{eq:1.vp}.
\end{remark}
 
\paragraph{More terminology.} The discrete counterpart of the
kernel and inf-sup terminology introduced in Section \ref{sec:1} is
more or less straightforward to define. Let $\mathbb S_h \subset
\mathbb S$ and $\mathbb T_h \subset \mathbb T$ be sequences of
finite dimensional spaces of the Hilbert spaces $\mathbb S$ and
$\mathbb T$ and consider again a bounded bilinear form $c:\mathbb
S\times \mathbb T \to \mathbb R$. The discrete kernel of $c$ is the
set
\[
\mathrm{ker}\,c_h:=\{ s_h \in \mathbb S_h\,:\, c(s_h,t_h)=0 \quad
\forall t_h\in \mathbb T_h\},
\]
that is, it is the kernel of the discrete operator $\mathbb S_h\ni
s_h \mapsto c(s_h,\punto)\in \mathbb T_h^*$. We say that $c$
satisfies a {\bf uniform (discrete) inf-sup condition} for the pair
$\{ \mathbb S_h,\mathbb T_h\}$ when there exists $\beta >0$ such
that
\begin{equation}\label{eq:2.4}
\sup_{0\neq s_h \in \mathbb S_h}\frac{c(s_h,t_h)}{\| s_h\|_{\mathbb
S}}\ge \beta \|t_h\|_{\mathbb T} \qquad \forall t_h \in \mathbb T_h
\qquad \forall h.
\end{equation}
This condition implies that for all $\xi \in \mathbb T^*$ we can
find a sequence $s_h \in \mathbb S_h$ such that
\begin{equation}\label{eq:2.5}
c(s_h,t_h)=\xi(t_h) \quad \forall t_h \in \mathbb T_h,  \qquad \|
s_h\|_{\mathbb S}\le \beta^{-1}\|\xi\|_{\mathbb T^*}\qquad \forall h
\end{equation}
and the maps $\xi \mapsto s_h$ are linear.

Let us now discuss the properties of the discrete spaces
that ensure stability for \eqref{eq:2.0}. 
The first set of assumptions is natural for each of the sets of
discrete couples.
\begin{hypothesis}\label{hyp:1}
\begin{itemize}
\item[{\rm (a)}] The spaces for the discrete Stokes flow are stable
when applied to the Stokes problem with homogeneous Dirichlet 
boundary condition:
\begin{equation}\label{eq:2.1}
\sup_{\mathbf 0 \neq \mathbf u_h \in \mathbf H^h_0(\Omega_{\mathrm
S})} \frac{(\mathrm{div}\,\mathbf u_h, q_h)_{\Omega_{\mathrm
S}}}{\|\mathbf u_h\|_{\mathbf H^1(\Omega_{\mathrm S})}}\ge
\beta_{\mathrm S}\| q_h\|_{\Omega_{\mathrm S}}\qquad \forall q_h \in
L^h_0(\Omega_{\mathrm S}).
\end{equation}
\item[{\rm (b)}] The spaces for the discrete Darcy flow are stable when
applied to the Darcy equations with homogeneous normal trace:
\begin{equation}\label{eq:2.2}
\sup_{\mathbf 0 \neq \mathbf u_h \in \mathbf H^h_0(\Omega_{\mathrm
D})} \frac{(\mathrm{div}\,\mathbf u_h, q_h)_{\Omega_{\mathrm
D}}}{\|\mathbf u_h\|_{\mathbf H(\mathrm{div},\Omega_{\mathrm
D})}}\ge \beta_{\mathrm D}\| q_h\|_{\Omega_{\mathrm D}}\qquad
\forall q_h \in L^h_0(\Omega_{\mathrm D}).
\end{equation}
\item[{\rm (c)}] $\mathrm{div}\,\mathbf
H^h(\Omega_{\mathrm D}) \subset L^h(\Omega_{\mathrm D})$.
\end{itemize}
\end{hypothesis}

\begin{hypothesis}\label{hyp:2}
\begin{itemize}
\item[{\rm (a)}] There exist $\beta_0,\beta_1$ such that for all $h$
\begin{equation}\label{eq:2.3}
\exists \mathbf v_h \in \mathbf H^h_{\mathrm
S}(\Omega_{\mathrm S}) \quad \mbox{s.t.}\quad  
\langle\mathbf v_h\cdot\boldsymbol\nu,1\rangle_\Sigma\ge \beta_0 \quad \text{and}\quad 
 \|\mathbf v_h\|_{\mathbf H^1(\Omega_{\mathrm S})}\le \beta_1.
\end{equation}
\item[{\rm (b)}] There exists a stable lifting of the
normal trace.
\end{itemize}
\end{hypothesis}

The validity of last set of hypotheses will be discussed in Sections \ref{sec:5} and \ref{sec:6}. 
Although at this stage these are just theoretical assumptions of the discrete spaces,
condition \eqref{eq:2.3} can be understood as the possibility of the
discrete spaces to create a certain amount of flux across $\Sigma$
with a bounded velocity field.

\begin{proposition}\label{prop:2.1} Hypotheses \ref{hyp:1} and \ref{hyp:2} imply that 
the bilinear form \eqref{eq:1.0a} satisfies a uniform inf-sup
condition for the pair $\{ \mathbb X^h,\mathbb Q^h\}$.
\end{proposition}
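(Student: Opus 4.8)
The plan is to reproduce, at the discrete level, the splitting argument used for the continuous inf-sup condition in Proposition \ref{prop:1.2}. I write $b=b_1+b_2$ as in \eqref{eq:1.0b} and aim to verify the three ingredients that feed the abstract characterization of \cite[Theorem 2]{GatSay08}: (i) a uniform inf-sup condition for $b_1$ on the pair $\{\mathbb X^h, L^h_\star(\Omega)\}$; (ii) a uniform inf-sup condition for $b_2$ on the pair $\{\mathbb X^h,\mathbb R\}$; and (iii) the algebraic decomposition $\mathbb X^h=\mathrm{ker}\,b_{1,h}+\mathrm{ker}\,b_{2,h}$ of the discrete kernels. Since the constants produced by each step trace back to the $h$-independent constants $\beta_{\mathrm S},\beta_{\mathrm D},\beta_0,\beta_1$ and to the lifting constant $C$, the resulting inf-sup constant for $b$ will be uniform in $h$.

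For (i) I would first observe that the unconstrained product space $\mathbb X^h_0:=\mathbf H^h_0(\Omega_{\mathrm S})\times\mathbf H^h_0(\Omega_{\mathrm D})$ is contained in $\mathbb X^h$: fields in $\mathbb X^h_0$ have vanishing normal trace on $\Sigma$ from both sides, so the coupling constraint $\mathbf u^h_{\mathrm D}\cdot\boldsymbol\nu=R^h_D(\mathbf u^h_{\mathrm S}\cdot\boldsymbol\nu)$ reduces to $0=R^h_D 0$ and holds trivially. On $\mathbb X^h_0$ the form $b_1$ decouples into the Stokes and Darcy divergence pairings, so given $(q_{\mathrm S},q_{\mathrm D})\in L^h_\star(\Omega)$ I can pick the Stokes and Darcy test fields separately from \eqref{eq:2.1} and \eqref{eq:2.2} and combine them; this yields a uniform inf-sup constant $\min(\beta_{\mathrm S},\beta_{\mathrm D})$ (up to equivalence of norms) for $\{\mathbb X^h_0,L^h_\star(\Omega)\}$. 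Because enlarging the space over which the supremum is taken can only increase it, the same bound holds for $\{\mathbb X^h,L^h_\star(\Omega)\}$.

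Step (ii) is where the essential hypotheses enter and is the part I expect to be the main obstacle, since $\mathbb X^h$ is not a product space and one must produce a genuine element of the constrained space carrying a controlled amount of flux across $\Sigma$. The inf-sup condition for $b_2$ is equivalent to exhibiting a single $\mathbf v_h\in\mathbb X^h$ with $\langle\mathbf v^h_{\mathrm S}\cdot\boldsymbol\nu,1\rangle_\Sigma$ bounded below and $\|\mathbf v_h\|_{\mathbb X}$ bounded above, uniformly in $h$. I would take the Stokes component to be the field $\mathbf v_h\in\mathbf H^h_{\mathrm S}(\Omega_{\mathrm S})$ supplied by Hypothesis \ref{hyp:2}(a), which already satisfies $\langle\mathbf v_h\cdot\boldsymbol\nu,1\rangle_\Sigma\ge\beta_0$ and $\|\mathbf v_h\|_{\mathbf H^1(\Omega_{\mathrm S})}\le\beta_1$, and then complete it to an admissible pair by setting the Darcy component equal to $\mathbf L^h(R^h_D(\mathbf v_h\cdot\boldsymbol\nu))$, where $\mathbf L^h$ is the stable lifting of Hypothesis \ref{hyp:2}(b). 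Since $R^h_D(\mathbf v_h\cdot\boldsymbol\nu)\in\Phi^h_{\mathrm D}$, this Darcy field lies in $\mathbf H^h_{\mathrm D}(\Omega_{\mathrm D})$ and, by \eqref{eq:3.6}, has normal trace exactly $R^h_D(\mathbf v_h\cdot\boldsymbol\nu)$, so the resulting pair belongs to $\mathbb X^h$. Its $\mathbb X$-norm is then controlled by chaining the stability of the lifting with the crude but uniform estimates $\|\cdot\|_{H_{00}^{-1/2}(\Sigma)}\lesssim\|\cdot\|_{L^2(\Sigma)}$, the $L^2(\Sigma)$-stability of the orthogonal projection $R^h_D$, and the trace bound $\|\mathbf v_h\cdot\boldsymbol\nu\|_{L^2(\Sigma)}\lesssim\|\mathbf v_h\|_{\mathbf H^1(\Omega_{\mathrm S})}$; this gives $\|\mathbf L^h(R^h_D(\mathbf v_h\cdot\boldsymbol\nu))\|_{\mathbf H(\mathrm{div},\Omega_{\mathrm D})}\lesssim\beta_1$ with an $h$-independent constant. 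Hence $\|\mathbf v_h\|_{\mathbb X}$ stays uniformly bounded while the flux remains $\ge\beta_0$, which is exactly the uniform inf-sup condition for $b_2$.

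Finally, for (iii) I would argue as in the continuous case: given $\mathbf v_h\in\mathbb X^h$, the uniform inf-sup of $b_1$ on $\{\mathbb X^h_0,L^h_\star(\Omega)\}$ from step (i) lets me solve for $\mathbf w_h\in\mathbb X^h_0$ with $b_1(\mathbf w_h,q_h)=b_1(\mathbf v_h,q_h)$ for all $q_h\in L^h_\star(\Omega)$, and with $\|\mathbf w_h\|_{\mathbb X}$ bounded by $\|\mathbf v_h\|_{\mathbb X}$ uniformly in $h$. Then $\mathbf w_h\in\mathbb X^h_0\subset\mathrm{ker}\,b_{2,h}$ because its normal trace on $\Sigma$ vanishes, while $\mathbf v_h-\mathbf w_h\in\mathbb X^h$ satisfies $b_1(\mathbf v_h-\mathbf w_h,\cdot)=0$ on $L^h_\star(\Omega)$, i.e. it lies in $\mathrm{ker}\,b_{1,h}$; this proves $\mathbb X^h=\mathrm{ker}\,b_{1,h}+\mathrm{ker}\,b_{2,h}$ with a decomposition bounded uniformly in $h$. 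With (i), (ii) and (iii) in hand, \cite[Theorem 2]{GatSay08} delivers the uniform inf-sup condition for $b=b_1+b_2$ on the pair $\{\mathbb X^h,\mathbb Q^h\}$ with $\mathbb Q^h=L^h_\star(\Omega)\times\mathbb R$, which is the claim.
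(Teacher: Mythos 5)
Your proof is correct and follows essentially the same route as the paper: the same splitting $b=b_1+b_2$, the same observation that Hypothesis \ref{hyp:1}(a)--(b) give the uniform inf-sup of $b_1$ on $\{\mathbb X^h_0,L^h_\star(\Omega)\}$ and hence on $\{\mathbb X^h,L^h_\star(\Omega)\}$, the same construction $\mathbf v^h_{\mathrm D}:=\mathbf L^h R^h_D(\mathbf v^h_{\mathrm S}\cdot\boldsymbol\nu)$ with the identical chain of bounds for $b_2$, and the same stable kernel decomposition $\mathbb X^h=\mathrm{ker}\,b_{1,h}+\mathrm{ker}\,b_{2,h}$. The only cosmetic difference is the citation: for the discrete (uniform) version of the product-space characterization the paper invokes \cite[Theorem 8]{GatSay08} rather than \cite[Theorem 2]{GatSay08}, which is the continuous statement used in Proposition \ref{prop:1.2}.
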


\begin{proof}
We will use the schematic method for proving inf-sup conditions in
product spaces given in \cite[Theorem 8]{GatSay08}. In order to do it, we
consider the decomposition $b=b_1+b_2$ given in \eqref{eq:1.0b}.
Let
\[
\mathbb X^h_0:=\mathbb X^h \cap \mathbb X_0= \mathbf
H^h_0(\Omega_{\mathrm S})\times \mathbf H^h_0(\Omega_{\mathrm D}).
\]
Conditions \eqref{eq:2.1} and \eqref{eq:2.2} in Hypothesis
\ref{hyp:1} are equivalent to $b_1$ satisfying a uniform inf-sup
condition for the pair $\{ \mathbb X^h_0, L^h_\star(\Omega)\}$. Therefore, they
imply the inf-sup condition of $b_1$ for $\{ \mathbb X^h,\mathbb L^h_\star(\Omega)\}$. 

On the other hand, we take $\mathbf v^h_{\mathrm S}$ as in Hypothesis \ref{hyp:2}(a) and
define $\mathbf v^h_{\mathrm D}:=\mathbf L^h\, R^h_D (\mathbf
v^h_{\mathrm S}\cdot\boldsymbol\nu)$, where $\mathbf L^h:
\Phi^h_{\mathrm D}\to \mathbf H^h_{\mathrm D}(\Omega_{\mathrm D})$
is the stable lifting of the normal trace. Then
\begin{eqnarray*}
\|\mathbf v^h_{\mathrm D}\|_{\mathbf H(\mathrm{div},\Omega_{\mathrm
D})} &\lesssim &  \|R^h_D(\mathbf v^h_{\mathrm
S}\cdot\boldsymbol\nu)\|_{H_{00}^{-1/2}(\Sigma)}\lesssim\|R^h_D(\mathbf
v^h_{\mathrm
S}\cdot\boldsymbol\nu)\|_\Sigma\\
&\lesssim &  \|\mathbf v^h_{\mathrm
S}\cdot\boldsymbol\nu\|_\Sigma\lesssim 1
\end{eqnarray*}
and $\mathbf v^h_{\mathrm D}\cdot\boldsymbol\nu =  R^h_D(\mathbf
v^h_{\mathrm S}\cdot\boldsymbol\nu)$ which implies that $(\mathbf v^h_{\mathrm S},\mathbf v^h_{\mathrm
D})\in \mathbb X^h$. We have then just shown that there exist $\beta_0, \beta'_1>0$ such that for all $h$
\begin{equation}\label{eq:5.2}
\exists \mathbf v_h\equiv (\mathbf v^h_{\mathrm S},\mathbf v^h_{\mathrm D})
\in \mathbb X^h \quad \mbox{s.t.}\quad
\langle\mathbf v^h_{\mathrm S}\cdot\boldsymbol\nu,1\rangle_\Sigma
\ge \beta_0 \quad \text{and}\quad  \|\mathbf v_h\|_{\mathbb X}\le
\beta'_1.
\end{equation}
It is easy to see that  \eqref{eq:5.2} is equivalent to
the uniform inf-sup condition of $b_2$ for $\{ \mathbb X^h,\mathbb
R\}$.

Let now $\mathbf v_h \in \mathbb X^h$. By the uniform inf-sup
condition of $b_1$ for $\{ \mathbb X^h_0,L^h_\star(\Omega)\}$ we can
find $\mathbf w_h \in \mathbb X^h_0$ such that
\[
(\mathrm{div}\,\mathbf w_h,q_h)_\Omega=(\mathrm{div}\,\mathbf
v_h,q_h)_\Omega \qquad \forall q_h \in L^h_\star(\Omega), \qquad
\|\mathbf w_h\|_{\mathbb X}\lesssim \|\mathrm{div}\,\mathbf
v_h\|_\Omega\le \|\mathbf v_h\|_{\mathbb X}.
\]
By definition, $\mathbf w_h \in \mathbb X^h_0\subset
\mathrm{ker}\,b_{2,h}$ and $\mathbf v_h -\mathbf w_h \in
\mathrm{ker}\,b_{1,h}$.
Therefore we can decompose $\mathbf v_h=\mathbf w_h+(\mathbf v_h-\mathbf w_h) \in \mathrm{ker}\,b_{2,h}
+ \mathrm{ker}\,b_{1,h} $, and this decomposition is stable. This is
enough to prove the uniform inf-sup condition of $b$.
\end{proof}

For the sake of measuring errors, we consider the space
\[
\widetilde{\mathbb X}:= \mathbf H^1_{\mathrm S}(\Omega_{\mathrm S})
\times \mathbf H_{\mathrm D}(\mathrm{div},\Omega_{\mathrm D}),
\]
that contains both $\mathbb X$ and $\mathbb X^h$. This
space is endowed with the same product norm as $\mathbb X$. In order to simplify the argument we will use a global
bounded bilinear form $A: (\widetilde{\mathbb X}\times \mathbb Q)
\times (\widetilde{\mathbb X}\times \mathbb Q) \to \mathbb R$ given
by
\[
A\big( (\mathbf u,(p,\delta),\, (\mathbf v,(q,\rho)\big) :=
a(\mathbf u,\mathbf v)-b(\mathbf v,(p,\delta))+b(\mathbf
u,(q,\rho)).
\]

\begin{proposition}[Stability and Strang estimate]\label{prop:5.1}
Hypotheses \ref{hyp:1} and \ref{hyp:2} imply unique solvability of
the discrete equations \eqref{eq:2.0} and the error estimate
\begin{eqnarray*}
& & \hspace{-3cm} \|\mathbf u-\mathbf u_h\|_{\mathbb
X}+\|p-p_h\|_{\Omega_{\mathrm
D}}+\|p+\delta-(p_h+\delta_h)\|_{\Omega_{\mathrm S}}\\
&\lesssim  &  \inf_{\mathbf v_h\in \mathbb X^h}\|
\mathbf u-\mathbf v_h\|_{\mathbb X}+\inf_{q_h \in
L^2_\star(\Omega)}\| p-q_h\|_\Omega + \mathrm{C}_h(p_{\mathrm
D}),
\end{eqnarray*}
where the consistency error term $C_h(p_{\mathrm D})\equiv 0$ if $\Phi_{\mathrm S}^h \subset \Phi_{\mathrm D}^h$ and 
\[
\mathrm C_h(p_{\mathrm D}) \lesssim h^{1/2} \|p_{\mathrm{D}} - R^h_D p_{\mathrm{D}} \|_\Sigma
\]
otherwise.
\end{proposition}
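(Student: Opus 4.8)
The plan is to proceed in the classical two-stage fashion for a nonconforming Galerkin approximation of a saddle-point problem: first establish that the combined form $A$ is uniformly inf-sup stable on $\mathbb X^h\times\mathbb Q^h$ (which yields unique solvability), and then run a generalized Strang argument in which the failure of conformity $\mathbb X^h\not\subset\mathbb X$ is absorbed into a single consistency functional supported on $\Sigma$. For the stability I would invoke the Brezzi theory of \cite[Chapter 2]{BreFor91}: since Proposition \ref{prop:2.1} already gives the uniform inf-sup condition of $b$ for $\{\mathbb X^h,\mathbb Q^h\}$, it remains to prove coercivity of $a$ on the discrete kernel $\mathrm{ker}\,b_h$, with an $h$-independent constant. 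The crucial observation is that every $\mathbf u_h\in\mathrm{ker}\,b_h$ is \emph{exactly} divergence-free in the Darcy subdomain: testing $b(\mathbf u_h,(q_h,\rho_h))=0$ against $q_h\in L^h_0(\Omega_{\mathrm D})$ and using Hypothesis \ref{hyp:1}(c) forces $\mathrm{div}\,\mathbf u^h_{\mathrm D}$ to be constant, testing against $\rho_h\in\mathbb R$ gives $\langle\mathbf u^h_{\mathrm S}\cdot\boldsymbol\nu,1\rangle_\Sigma=0$, and since $1\in\mathbb P_0(\Sigma^h_{\mathrm D})\subset\Phi^h_{\mathrm D}$ by \eqref{eq:6.1}, the defining relation $\mathbf u^h_{\mathrm D}\cdot\boldsymbol\nu=R^h_D(\mathbf u^h_{\mathrm S}\cdot\boldsymbol\nu)$ of $\mathbb X^h$ combined with the divergence theorem yields $\int_{\Omega_{\mathrm D}}\mathrm{div}\,\mathbf u^h_{\mathrm D}=-\langle\mathbf u^h_{\mathrm S}\cdot\boldsymbol\nu,1\rangle_\Sigma=0$, so the constant vanishes. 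With $\mathrm{div}\,\mathbf u^h_{\mathrm D}=0$ the Darcy term of $a$ controls $\|\mathbf u^h_{\mathrm D}\|_{\mathbf H(\mathrm{div},\Omega_{\mathrm D})}$ and Korn's inequality controls $\|\mathbf u^h_{\mathrm S}\|_{\mathbf H^1(\Omega_{\mathrm S})}$, exactly as in Proposition \ref{prop:1.3}; combining this coercivity with the inf-sup of $b$ produces a uniform inf-sup bound for $A$ on $\mathbb X^h\times\mathbb Q^h$, hence unique solvability.

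The heart of the error analysis is the consistency identity. I would test the strong equations against an arbitrary $\mathbf v_h=(\mathbf v^h_{\mathrm S},\mathbf v^h_{\mathrm D})\in\mathbb X^h$, integrating the Stokes momentum equation by parts over $\Omega_{\mathrm S}$ and the Darcy momentum equation over $\Omega_{\mathrm D}$, and substitute the normal-stress coupling condition \eqref{eq:0.2} to cancel the $\boldsymbol\varepsilon(\mathbf u_{\mathrm S})\boldsymbol\nu$ and $p_{\mathrm S}$ boundary contributions, which also reproduces the term $\nu\langle\kappa^{-1}\boldsymbol\pi_t\mathbf u_{\mathrm S},\boldsymbol\pi_t\mathbf v^h_{\mathrm S}\rangle_\Sigma$ of $a$. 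After carefully tracking the constant $\delta$, whose interface contribution reassembles precisely the term $\delta\langle\mathbf v^h_{\mathrm S}\cdot\boldsymbol\nu,1\rangle_\Sigma$ of $b$, the only surviving interface term is $\langle p_{\mathrm D},(\mathbf v^h_{\mathrm S}-\mathbf v^h_{\mathrm D})\cdot\boldsymbol\nu\rangle_\Sigma$. Because $\mathbf v^h_{\mathrm D}\cdot\boldsymbol\nu=R^h_D(\mathbf v^h_{\mathrm S}\cdot\boldsymbol\nu)$ and $R^h_D$ is the $L^2(\Sigma)$-orthogonal projection onto $\Phi^h_{\mathrm D}$, this equals $\langle p_{\mathrm D}-R^h_D p_{\mathrm D},\mathbf v^h_{\mathrm S}\cdot\boldsymbol\nu\rangle_\Sigma$, which vanishes precisely when $\Phi^h_{\mathrm S}\subset\Phi^h_{\mathrm D}$. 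Since $\mathbb Q^h\subset\mathbb Q$, the mass-balance equation of \eqref{eq:1.vp} holds exactly against $(q_h,\rho_h)\in\mathbb Q^h$, so subtracting the discrete scheme \eqref{eq:2.0} yields the quasi-orthogonality relation $A\big((\mathbf u-\mathbf u_h,(p-p_h,\delta-\delta_h)),(\mathbf v_h,(q_h,\rho_h))\big)=-\langle p_{\mathrm D}-R^h_D p_{\mathrm D},\mathbf v^h_{\mathrm S}\cdot\boldsymbol\nu\rangle_\Sigma$ for every test pair in $\mathbb X^h\times\mathbb Q^h$.

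I would then close the estimate by the standard Strang manipulation inside $\widetilde{\mathbb X}\times\mathbb Q$: for any $\mathbf w_h\in\mathbb X^h$ and $(r_h,\sigma_h)\in\mathbb Q^h$, the uniform inf-sup of $A$ bounds $\|(\mathbf u_h-\mathbf w_h,(p_h-r_h,\delta_h-\sigma_h))\|$ by the supremum of $A((\mathbf u_h-\mathbf w_h,\dots),V_h)/\|V_h\|$ over $V_h\in\mathbb X^h\times\mathbb Q^h$; inserting the discrete scheme, then the quasi-orthogonality, and using boundedness of $A$ reduces this to $\|(\mathbf u-\mathbf w_h,(p-r_h,\delta-\sigma_h))\|$ plus the supremum of the consistency functional. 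A triangle inequality, an infimum over $(\mathbf w_h,r_h)$, and the choice $\sigma_h=\delta$ (which matches the constant exactly) give the right-hand side $\inf_{\mathbf v_h\in\mathbb X^h}\|\mathbf u-\mathbf v_h\|_{\mathbb X}+\inf_{q_h\in L^h_\star(\Omega)}\|p-q_h\|_\Omega+C_h(p_{\mathrm D})$. On the left, since $p_{\mathrm S}-p^h_{\mathrm S}$ has zero mean on $\Omega_{\mathrm S}$ while $\delta-\delta_h$ is constant, the two are $L^2(\Omega_{\mathrm S})$-orthogonal, so the product pressure norm is equivalent to $\|p-p_h\|_{\Omega_{\mathrm D}}+\|p+\delta-(p_h+\delta_h)\|_{\Omega_{\mathrm S}}$, recovering the stated quantity.

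The delicate point, and what I expect to be the main obstacle, is extracting the $h^{1/2}$ factor in $C_h(p_{\mathrm D})$ rather than merely bounding the consistency functional by $\|p_{\mathrm D}-R^h_D p_{\mathrm D}\|_\Sigma\,\|\mathbf v^h_{\mathrm S}\cdot\boldsymbol\nu\|_\Sigma$. Here I would exploit that $p_{\mathrm D}-R^h_D p_{\mathrm D}$ is $L^2(\Sigma)$-orthogonal to $\Phi^h_{\mathrm D}\supseteq\mathbb P_0(\Sigma^h_{\mathrm D})$ to replace $\mathbf v^h_{\mathrm S}\cdot\boldsymbol\nu$ by $\mathbf v^h_{\mathrm S}\cdot\boldsymbol\nu-\Pi_0(\mathbf v^h_{\mathrm S}\cdot\boldsymbol\nu)$, where $\Pi_0$ is the $L^2(\Sigma)$-projection onto piecewise constants on $\Sigma^h_{\mathrm D}$. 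The approximation estimate $\|\phi-\Pi_0\phi\|_\Sigma\lesssim h^{1/2}\|\phi\|_{H^{1/2}(\Sigma)}$, valid on the regular mesh $\Sigma^h_{\mathrm D}$ by real interpolation between the $L^2$- and $H^1$-projection estimates, together with the trace bound $\|\mathbf v^h_{\mathrm S}\cdot\boldsymbol\nu\|_{H^{1/2}(\Sigma)}\lesssim\|\mathbf v^h_{\mathrm S}\|_{\mathbf H^1(\Omega_{\mathrm S})}$, then gives $|\langle p_{\mathrm D}-R^h_D p_{\mathrm D},\mathbf v^h_{\mathrm S}\cdot\boldsymbol\nu\rangle_\Sigma|\lesssim h^{1/2}\|p_{\mathrm D}-R^h_D p_{\mathrm D}\|_\Sigma\,\|\mathbf v_h\|_{\mathbb X}$, which is exactly the claimed $C_h(p_{\mathrm D})$; note that no quasi-uniformity of $\Sigma^h_{\mathrm D}$ is needed for this fractional estimate, only shape-regularity.
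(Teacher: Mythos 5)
Your proposal is correct and follows the paper's own proof in all essential respects: stability from Proposition \ref{prop:2.1} plus coercivity of $a$ on $\mathrm{ker}\,b_h$ (using Hypothesis \ref{hyp:1}(c) and $\mathbb P_0(\Sigma^h_{\mathrm D})\subset\Phi^h_{\mathrm D}$ to show that the discrete kernel is exactly divergence-free in $\Omega_{\mathrm D}$), the consistency identity $A(\widehat u-\widehat u_h,\widehat v_h)=-\langle(\mathbf v^h_{\mathrm S}-\mathbf v^h_{\mathrm D})\cdot\boldsymbol\nu,\,p_{\mathrm D}\rangle_\Sigma$ obtained by integration by parts with $p_{\mathrm D}\in H^1(\Omega_{\mathrm D})$, a Strang argument based on the uniform inf-sup of $A$, and the $h^{1/2}$ gain via orthogonality to piecewise constants combined with the standard approximation estimate. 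The single point where you deviate is the last step: you project the scalar $\mathbf v^h_{\mathrm S}\cdot\boldsymbol\nu$ and invoke the bound $\|\mathbf v^h_{\mathrm S}\cdot\boldsymbol\nu\|_{H^{1/2}(\Sigma)}\lesssim\|\mathbf v^h_{\mathrm S}\|_{\mathbf H^1(\Omega_{\mathrm S})}$, which is not available when $\Sigma$ is a non-flat polyhedral interface: $\boldsymbol\nu$ jumps across the edges of $\Sigma$, so the normal component of a field in $\mathbf H^{1/2}_{00}(\Sigma)$ is in general only facewise $H^{1/2}$, not globally. This is precisely why the paper replaces $\Pi^h_0(\mathbf v^h_{\mathrm S}\cdot\boldsymbol\nu)$ by $(\boldsymbol\Pi^h_0\mathbf v^h_{\mathrm S})\cdot\boldsymbol\nu$ --- a componentwise (vectorial) projection, legitimate because $\boldsymbol\nu$ is constant on each element of $\Sigma^h_{\mathrm D}$ --- and applies the piecewise-constant approximation estimate to the full vector trace $\mathbf v^h_{\mathrm S}\in\mathbf H^{1/2}_{00}(\Sigma)$. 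Your argument is repaired either by adopting that vectorial projection or by applying your scalar estimate face by face on $\Sigma$ (each element of $\Sigma^h_{\mathrm D}$ lies in a single face, on which $\boldsymbol\nu$ is constant); with that one-line fix, the rest of your proof, including the interpolation-based $h^{1/2}$ estimate without quasi-uniformity, stands as written.
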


\begin{proof} We already know from Proposition \ref{prop:2.1} that the inf-sup 
condition of $b$ holds true for the pairs $\{ \mathbb X^h,\mathbb Q^h\}$. 
Hence, to prove that the operator
\[
\mathbb X^h\times \mathbb Q^h\ni \widehat
u^h\longmapsto A(\widehat u^h,\punto): \mathbb
X^h \times \mathbb Q^h\to \mathbb R
\]
has a uniformly bounded inverse we just have to show that the bilinear form
$a$ is coercive in the discrete kernel $\mathrm{ker}\, b_h:= \{ \mathbf u^h \in
\mathbb X^h\,:\,b(\mathbf u^h,(q_h,\rho_h))=0 \quad
\forall (q_h,\rho_h)\in \mathbb Q^h\}$.
By Hypothesis \ref{hyp:1}(c), if $\mathbf u_h \in
\mathrm{ker}\,b_h$, then $\mathrm{div}\,\mathbf u_h \in
L^h(\Omega_{\mathrm D})$ and since
\[
(\mathrm{div}\,\mathbf u_h,q_h)_{\Omega_{\mathrm D}}=0 \quad
\forall q_h \in L^h_0(\Omega_{\mathrm D}),
\]
it follows that $\mathrm{div}\,\mathbf u_h\in \mathbb
P_0(\Omega_{\mathrm D})$. Also, as part of the fact that $\mathbf
u_h \in \mathrm{ker}\,b_h$, it follows that
\[
(\mathrm{div}\,\mathbf u_{\mathrm{D}}^h,1)_{\Omega_{\mathrm D}} = 
\langle\mathbf u_{\mathrm{D}}^h\cdot\boldsymbol\nu,1\rangle_\Sigma=
\langle R^h_D(\mathbf u_{\mathrm{S}}^h\cdot\boldsymbol\nu),1\rangle_\Sigma=
\langle \mathbf u_{\mathrm{S}}^h\cdot\boldsymbol\nu, 1\rangle_\Sigma = 0
\]
where we used here the fact that
$\mathbb P_0(\Sigma) \subset \Phi_{\mathrm{D}}^h$.
Therefore, if $\mathbf u_h \in \mathrm{ker}\,b_h$, 
$\mathrm{div}\,\mathbf u_h =0$ in $\Omega_{\mathrm D}$. The coercivity  of $a$ on 
$\mathrm{ker}\, b_h$ follows then from the fact that it is coercive on
\[
\mathbf H^1_{\mathrm S}(\Omega_{\mathrm
S})\times \{ \mathbf v\in \mathbf H_{\mathrm
D}(\mathrm{div},\Omega_{\mathrm D})\,:\,\mathrm{div}\,\mathbf v=0\}.
\]

Let now $\widehat u:=(\mathbf u,(p,\delta))$ and $\widehat
u_h:=(\mathbf u_h,(p_h,\delta_h))$. Then, for all 
$\widehat v_h=(\mathbf v_h,(q_h,\rho_h))\in \mathbb
X^h\times\mathbb Q^h$,
\begin{equation}\label{eq:5.3}
A\big(\widehat u-\widehat u_h,\widehat v_h\big)=a(\mathbf u,\mathbf v_h)-b(\mathbf
v_h,(p,\delta))-(\mathbf f_{\mathrm S},\mathbf v_h)_{\Omega_{\mathrm
S}}
\end{equation}
and
\begin{equation}\label{eq:5.4}
a(\mathbf u,\mathbf v)-b(\mathbf v,(p,\delta))-(\mathbf f_{\mathrm
S},\mathbf v_{\mathrm S})_{\Omega_{\mathrm S}}=-\langle \mathbf
v_{\mathrm S}\cdot\boldsymbol\nu-\mathbf v_{\mathrm
D}\cdot\boldsymbol\nu,p_{\mathrm D}\rangle_\Sigma \qquad \forall
\mathbf v\in \widetilde{\mathbb X}.
\end{equation}
In this last formula, we have used that if $(\mathbf
u,(p,\delta))\in \mathbb X\times \mathbb Q$ is the solution of the
equations \eqref{eq:1.vp}, then $p_{\mathrm D} \in
H^1(\Omega_{\mathrm D})$. Therefore, by \eqref{eq:5.3} and  \eqref{eq:5.4},
\begin{equation}\label{eq:5.5}
A(\widehat u-\widehat u_h,\widehat v_h)=-\langle (\mathbf v_{\mathrm S}^h-\mathbf
v_{\mathrm D}^h)\cdot\boldsymbol\nu,p_{\mathrm D}\rangle_\Sigma.
\end{equation}

The discrete inf-sup condition satisfied by the global bilinear form $A$ and \eqref{eq:5.5} 
show that for all $\widehat w_h\in \mathbb X^h\times\mathbb Q^h$,
\begin{eqnarray*}
\| \widehat u-\widehat u_h\|_{\mathbb X\times \mathbb
Q}&\le & \|\widehat u-\widehat w_h\|_{\mathbb X\times
\mathbb Q}+\|\widehat u_h-\widehat w_h\|_{\mathbb X \times \mathbb Q}\\
& \lesssim & \|\widehat u-\widehat w_h\|_{\mathbb X\times
\mathbb Q}+ \sup_{0\neq \widehat v_h \in \mathbb X^h\times\mathbb Q^h} 
\frac{A(\widehat u_h-\widehat w_h,\widehat
v_h)}{\| \widehat v_h\|_{\mathbb X \times \mathbb Q}}\\
& \lesssim &  \| \widehat u-\widehat w_h\|_{\mathbb X \times
\mathbb Q}+ \sup_{0 \neq \widehat v_h \in \mathbb X^h\times\mathbb Q^h} 
\frac{A(\widehat u-\widehat u_h,\widehat v_h)}{\| \widehat
v_h\|_{\mathbb X \times \mathbb Q}}\\
& \lesssim &  \| \widehat u-\widehat w_h\|_{\mathbb X \times
\mathbb Q}+ \sup_{0 \neq \widehat v_h \in \mathbb X^h\times\mathbb Q^h} 
\frac{ | \langle (\mathbf v^h_{\mathrm S}-\mathbf v^h_{\mathrm D})\cdot\boldsymbol\nu, p_{\mathrm D}\rangle_\Sigma | }
{\| \widehat v_h\|_{\mathbb X\times \mathbb Q}}.
\end{eqnarray*}

It is clear that, if $\Phi_{\mathrm S}^h \subset \Phi_{\mathrm D}^h$, the last term of the previous inequality vanishes identically. 
In the general case we have to estimate the consistency error
\[
 C_h(p_{\mathrm{D}}):= \sup_{0 \neq \widehat v_h \in \mathbb X^h\times\mathbb Q^h} 
\frac{ | \langle (\mathbf v^h_{\mathrm S}-\mathbf v^h_{\mathrm D})\cdot\boldsymbol\nu, p_{\mathrm D}\rangle_\Sigma | }
{\| \widehat v_h\|_{\mathbb X\times \mathbb Q}}.
\]
To this end we introduce the $L^2(\Sigma)$-projection $\Pi^h_0:L^2(\Sigma)\to \mathbb P_0(\Sigma^h_{\mathrm{D}})$ 
onto the space of piecewise constant functions and denote $\boldsymbol\Pi^h_0:L^2(\Sigma)^d\to \mathbb P_0(\Sigma^h_{\mathrm{D}})^d$ 
its vectorial counterpart.  It is straightforward that
\begin{eqnarray*}
\langle (\mathbf v^h_{\mathrm S}-\mathbf v^h_{\mathrm D})\cdot\boldsymbol\nu, p_{\mathrm D}\rangle_\Sigma&=&
\langle (\mathbf v^h_{\mathrm S}-\mathbf v^h_{\mathrm D})\cdot\boldsymbol\nu, p_{\mathrm D}-R^h_D p_{\mathrm D}\rangle_\Sigma
= \langle \mathbf v^h_{\mathrm S}\cdot\boldsymbol\nu, p_{\mathrm D}-R^h_D p_{\mathrm D}\rangle_\Sigma\\
&=& \langle \mathbf v^h_{\mathrm S}\cdot\boldsymbol\nu-\Pi^h_0(\mathbf v^h_{\mathrm S}\cdot\boldsymbol\nu), 
p_{\mathrm D}-R^h_D p_{\mathrm D}\rangle_\Sigma\\
&=& \langle( \mathbf v^h_{\mathrm S}-\boldsymbol\Pi^h_0\mathbf v^h_{\mathrm S})\cdot\boldsymbol\nu, 
p_{\mathrm D}-R^h_D p_{\mathrm D}\rangle_\Sigma\\
&\lesssim & \|  \mathbf v^h_{\mathrm S}-\boldsymbol\Pi^h_0\mathbf v^h_{\mathrm S}\|_\Sigma\|
 p_{\mathrm D}-R^h_D p_{\mathrm D}\|_\Sigma\\
&\lesssim& h^{1/2} \|\mathbf v^h_{\mathrm S}\|_{\mathbf{H}_{00}^{1/2}(\Sigma)}\|
 p_{\mathrm D}-R^h_D p_{\mathrm D}\|_\Sigma,
\end{eqnarray*}
where in the last inequality we have applied a well known approximation estimate for piecewise constant functions. 
Using the trace theorem in $\mathbf{H}^1(\Omega_{\mathrm{S}})$ we deduce that the consistency error may be bounded by
\[
\mathrm C_h(p_{\mathrm D})\lesssim  h^{1/2}\|
 p_{\mathrm D}-R^h_D p_{\mathrm D}\|_\Sigma
\]
and the result follows.
\end{proof}

\section{Approximation properties of $\mathbb X^h$}\label{sec:4}
In principle, the restriction of equal normal component on $\Sigma$ can
reduce the size of the separate spaces ($\mathbf H^h(\Omega_{\mathrm
S})$ and $\mathbf H^h(\Omega_{\mathrm D})$) and limit the
approximation properties, unless appropriate matching on the
boundary allows for a rich enough discrete space. Note that (as
proven in Proposition \ref{prop:5.1}), this is not a matter of
stability, but of approximation.

\begin{proposition}\label{prop:5.4}
Hypothesis \ref{hyp:2}(b) implies that for all $\mathbf u \in \mathbb
X$
\begin{multline}\label{eq:mia}
\inf_{\mathbf v_h \in \mathbb X^h} \| \mathbf
u-\mathbf v_h\|_{\mathbb X} \lesssim  \inf_{\mathbf
u^h_{\mathrm S} \in \mathbf H^h_{\mathrm S}(\Omega_{\mathrm S})} \|
\mathbf u_{\mathrm S}-\mathbf u^h_{\mathrm S}\|_{\mathbf
H^1(\Omega_{\mathrm S})} + \inf_{\mathbf u^h_{\mathrm D}\in \mathbf
H^h_{\mathrm D}(\Omega_{\mathrm D})} \| \mathbf u_{\mathrm
D}-\mathbf u^h_{\mathrm D}\|_{\mathbf
H(\mathrm{div},\Omega_{\mathrm D})}\\
  + \,\lambda(h)\, \|\mathbf u_{\mathrm S}\cdot\boldsymbol\nu-R^h_D(\mathbf
u_{\mathrm S}\cdot\boldsymbol\nu)\|_{\Sigma},
\end{multline}
with $\lambda(h)\equiv 0$ if $\Phi^h_{\mathrm S}\subset \Phi^h_{\mathrm D}$ and 
$\lambda(h) \lesssim h^{1/2}$ otherwise.
\end{proposition}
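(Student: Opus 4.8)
The plan is to exhibit an explicit competitor $\mathbf v_h=(\mathbf v^h_{\mathrm S},\mathbf v^h_{\mathrm D})\in\mathbb X^h$ built from the separate best approximations, and to quantify the price paid for enforcing the discrete interface constraint. First I would fix near-best approximations $\mathbf w^h_{\mathrm S}\in\mathbf H^h_{\mathrm S}(\Omega_{\mathrm S})$ of $\mathbf u_{\mathrm S}$ and $\mathbf w^h_{\mathrm D}\in\mathbf H^h_{\mathrm D}(\Omega_{\mathrm D})$ of $\mathbf u_{\mathrm D}$, keep the Stokes part $\mathbf v^h_{\mathrm S}:=\mathbf w^h_{\mathrm S}$, and repair the Darcy part by setting
\[
\mathbf v^h_{\mathrm D}:=\mathbf w^h_{\mathrm D}+\mathbf L^h\gamma,\qquad \gamma:=R^h_D(\mathbf w^h_{\mathrm S}\cdot\boldsymbol\nu)-\mathbf w^h_{\mathrm D}\cdot\boldsymbol\nu .
\]
Since $\mathbf w^h_{\mathrm D}\cdot\boldsymbol\nu\in\Phi^h_{\mathrm D}$, the mismatch $\gamma$ lies in $\Phi^h_{\mathrm D}$, so the lifting is defined, $(\mathbf L^h\gamma)\cdot\boldsymbol\nu=\gamma$, and a direct computation gives $\mathbf v^h_{\mathrm D}\cdot\boldsymbol\nu=R^h_D(\mathbf v^h_{\mathrm S}\cdot\boldsymbol\nu)$, i.e. $\mathbf v_h\in\mathbb X^h$. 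By the triangle inequality and the stability of $\mathbf L^h$ from Hypothesis \ref{hyp:2}(b), the entire problem reduces to
\[
\|\mathbf L^h\gamma\|_{\mathbf H(\mathrm{div},\Omega_{\mathrm D})}\lesssim\|\gamma\|_{H^{-1/2}_{00}(\Sigma)}\lesssim\|\mathbf u_{\mathrm S}-\mathbf w^h_{\mathrm S}\|_{\mathbf H^1(\Omega_{\mathrm S})}+\|\mathbf u_{\mathrm D}-\mathbf w^h_{\mathrm D}\|_{\mathbf H(\mathrm{div},\Omega_{\mathrm D})}+\lambda(h)\,\|(I-R^h_D)(\mathbf u_{\mathrm S}\cdot\boldsymbol\nu)\|_\Sigma ,
\]
after which taking the infima over $\mathbf w^h_{\mathrm S}$ and $\mathbf w^h_{\mathrm D}$ (note that $\phi:=\mathbf u_{\mathrm S}\cdot\boldsymbol\nu$ is fixed) closes the argument.

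Writing $\phi:=\mathbf u_{\mathrm S}\cdot\boldsymbol\nu=\mathbf u_{\mathrm D}\cdot\boldsymbol\nu$ (the two traces agree because $\mathbf u\in\mathbb X$) and using $R^h_D(\mathbf w^h_{\mathrm D}\cdot\boldsymbol\nu)=\mathbf w^h_{\mathrm D}\cdot\boldsymbol\nu$, I would split $\gamma=\gamma_1+\gamma_2$ with
\[
\gamma_1:=R^h_D\big((\mathbf w^h_{\mathrm S}-\mathbf u_{\mathrm S})\cdot\boldsymbol\nu\big),\qquad
\gamma_2:=(\mathbf u_{\mathrm D}-\mathbf w^h_{\mathrm D})\cdot\boldsymbol\nu-(I-R^h_D)\phi .
\]
The term $\gamma_1$ is harmless: combining $L^2(\Sigma)\hookrightarrow H^{-1/2}_{00}(\Sigma)$, the $L^2(\Sigma)$-contractivity of the orthogonal projection $R^h_D$, and the $L^2(\Sigma)$ trace inequality on $\Omega_{\mathrm S}$ gives $\|\gamma_1\|_{H^{-1/2}_{00}(\Sigma)}\lesssim\|\gamma_1\|_\Sigma\le\|(\mathbf w^h_{\mathrm S}-\mathbf u_{\mathrm S})\cdot\boldsymbol\nu\|_\Sigma\lesssim\|\mathbf u_{\mathrm S}-\mathbf w^h_{\mathrm S}\|_{\mathbf H^1(\Omega_{\mathrm S})}$. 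In $\gamma_2$, the first summand is the normal trace of $\mathbf u_{\mathrm D}-\mathbf w^h_{\mathrm D}\in\mathbf H_{\mathrm D}(\mathrm{div},\Omega_{\mathrm D})$ and is thus bounded in $H^{-1/2}_{00}(\Sigma)$ by $\|\mathbf u_{\mathrm D}-\mathbf w^h_{\mathrm D}\|_{\mathbf H(\mathrm{div},\Omega_{\mathrm D})}$ through boundedness of the normal trace operator.

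The decisive step, and the source of the factor $\lambda(h)\lesssim h^{1/2}$, is the estimate of $g:=(I-R^h_D)\phi$ in the negative norm. Here I would use that $g$ is $L^2(\Sigma)$-orthogonal to $\mathbb P_0(\Sigma^h_{\mathrm D})\subset\Phi^h_{\mathrm D}$, which is exactly where \eqref{eq:6.1} enters: for any $\eta\in H^{1/2}_{00}(\Sigma)$ one may subtract its piecewise-constant projection $\Pi^h_0\eta$ to obtain $\langle g,\eta\rangle_\Sigma=\langle g,(I-\Pi^h_0)\eta\rangle_\Sigma\le\|g\|_\Sigma\,\|(I-\Pi^h_0)\eta\|_\Sigma\lesssim h^{1/2}\|g\|_\Sigma\,\|\eta\|_{H^{1/2}_{00}(\Sigma)}$, the last inequality being the standard $O(h^{1/2})$ approximation estimate for piecewise constants in $H^{1/2}$. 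This yields $\|(I-R^h_D)\phi\|_{H^{-1/2}_{00}(\Sigma)}\lesssim h^{1/2}\|(I-R^h_D)\phi\|_\Sigma$ and hence the claimed consistency term. In the conforming case $\Phi^h_{\mathrm S}\subset\Phi^h_{\mathrm D}$ one has $R^h_D(\mathbf w^h_{\mathrm S}\cdot\boldsymbol\nu)=\mathbf w^h_{\mathrm S}\cdot\boldsymbol\nu$, so $\gamma=(\mathbf w^h_{\mathrm S}-\mathbf u_{\mathrm S})\cdot\boldsymbol\nu+(\mathbf u_{\mathrm D}-\mathbf w^h_{\mathrm D})\cdot\boldsymbol\nu$ with the $(I-R^h_D)\phi$ contribution cancelling identically; bounding these two normal traces directly in $H^{-1/2}_{00}(\Sigma)$ gives $\lambda(h)\equiv0$. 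The main obstacle throughout is that $R^h_D$ is only an $L^2(\Sigma)$-projection and is \emph{not} a priori stable in the fractional spaces $H^{\pm1/2}_{00}(\Sigma)$; the proof circumvents this by never estimating $R^h_D$ in a fractional norm, absorbing the smooth Stokes trace error through the crude $L^2$ bound and extracting the $h^{1/2}$ gain solely from the mean-zero part $(I-R^h_D)\phi$, which is orthogonal to $\mathbb P_0(\Sigma^h_{\mathrm D})$.
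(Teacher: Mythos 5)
Your proof is correct and follows essentially the same route as the paper: the same competitor $(\mathbf w^h_{\mathrm S},\,\mathbf w^h_{\mathrm D}+\mathbf L^h\gamma)$ built from separate (near-)best approximations corrected by the stable lifting, the same reduction to estimating the interface mismatch in $H^{-1/2}_{00}(\Sigma)$, and the same key duality estimate $\|(I-R^h_D)\xi\|_{H^{-1/2}_{00}(\Sigma)}\lesssim h^{1/2}\|\xi\|_\Sigma$ exploiting $\mathbb P_0(\Sigma^h_{\mathrm D})\subset\Phi^h_{\mathrm D}$ (your $\gamma_1+\gamma_2$ split is just a regrouping of the paper's triangle inequality). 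The only added value is that you prove this duality estimate explicitly via the piecewise-constant projection $\Pi^h_0$, a detail the paper merely asserts.
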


\begin{proof}
Let
\[
\boldsymbol\Pi^h_{\mathrm S}:\mathbf H^1_{\mathrm S}(\Omega_{\mathrm
S}) \to \mathbf H^h_{\mathrm S}(\Omega_{\mathrm S}) \quad \text{and} \quad 
\boldsymbol\Pi^h_{\mathrm D}:\mathbf H_{\mathrm
D}(\mathrm{div},\Omega_{\mathrm D}) \to \mathbf H^h_{\mathrm
D}(\Omega_{\mathrm D})
\]
be the orthogonal projections onto the two discrete spaces
\eqref{eq:3.4}. Given $\mathbf u\equiv (\mathbf u_{\mathrm
S},\mathbf u_{\mathrm D})\in \mathbb X$, we consider 
\[
\mathbf u_h\equiv (\mathbf u^h_{\mathrm S},\mathbf u^h_{\mathrm
D}):=(\boldsymbol\Pi^h_{\mathrm S} \mathbf u_{\mathrm S},
\boldsymbol\Pi^h_{\mathrm D}\mathbf u_{\mathrm D}-\mathbf
L^h(\boldsymbol\Pi^h_{\mathrm D}\mathbf u_{\mathrm
D}\cdot\boldsymbol\nu-R^h_D(\boldsymbol\Pi^h_{\mathrm S}\mathbf
u_{\mathrm S}\cdot\boldsymbol\nu))).
\]
Since $\mathbf u^h_{\mathrm D}\cdot\boldsymbol\nu=
R^h_D(\boldsymbol\Pi^h_{\mathrm S}\mathbf u_{\mathrm
S}\cdot\boldsymbol\nu)=R^h_D(\mathbf u^h_{\mathrm
S}\cdot\boldsymbol\nu)$, it follows that $\mathbf u_h \in\mathbb
X^h$. On the other hand, the uniform boundedness of $\mathbf L^h$ yields 
\begin{multline}\label{eq:mia0}
\|\mathbf u-\mathbf u_h\|_{\mathbb X}\lesssim \|\mathbf u_{\mathrm
S}-\boldsymbol\Pi^h_{\mathrm S}\mathbf u_{\mathrm S}\|_{\mathbf
H^1(\Omega_{\mathrm S})}+\|\mathbf u_{\mathrm
D}-\boldsymbol\Pi^h_{\mathrm D}\mathbf u_{\mathrm D}\|_{\mathbf
H(\mathrm{div},\Omega_{\mathrm D})}+\\
\|\boldsymbol\Pi^h_{\mathrm D}
\mathbf u_{\mathrm D}\cdot\boldsymbol\nu- R^h_D(\boldsymbol\Pi^h_{\mathrm
S}\mathbf u_{\mathrm S}\cdot\boldsymbol\nu)\|_{H_{00}^{-1/2}(\Sigma)}
\end{multline}
and the triangle inequality together with the fact that $\mathbf u_{\mathrm
S}\cdot\boldsymbol\nu=\mathbf u_{\mathrm D}\cdot\boldsymbol\nu$ gives
\begin{multline}\label{eq:mia1}
 \|\boldsymbol\Pi^h_{\mathrm D}
\mathbf u_{\mathrm D}\cdot\boldsymbol\nu- R^h_D(\boldsymbol\Pi^h_{\mathrm
S}\mathbf u_{\mathrm S}\cdot\boldsymbol\nu)\|_{H_{00}^{-1/2}(\Sigma)} \leq
\|(\mathbf u_{\mathrm D} - \boldsymbol\Pi^h_{\mathrm D}
\mathbf u_{\mathrm D})\cdot\boldsymbol\nu \|_{H_{00}^{-1/2}(\Sigma)} + \\
\|\mathbf u_{\mathrm
S}\cdot\boldsymbol\nu - R^h_D(\boldsymbol\Pi^h_{\mathrm
S}\mathbf u_{\mathrm S}\cdot\boldsymbol\nu)\|_{H_{00}^{-1/2}(\Sigma)}. 
\end{multline}
Consequently, if $\Phi^h_{\mathrm S}\subset \Phi^h_{\mathrm D}$, $R^h_D$ 
is redundant in the last estimate. Consequently,     
\eqref{eq:mia} is satisfied in this case with $\lambda(h) = 0$ 
thanks to the boundedness of the normal trace operator. 

In the general case, we will need the estimate 
\begin{equation}\label{eq:mia2}
 \|\xi - R^h_D \xi\|_{H_{00}^{-1/2}(\Sigma)} \lesssim h^{1/2} \| \xi \|_{\Sigma}\quad \forall \xi \in L^2(\Sigma),
\end{equation}
obtained from  a duality argument by taking into account \eqref{eq:6.1}. 
We estimate the second term of the right hand side of \eqref{eq:mia1} by the triangle inequality 
\begin{multline*}
 \|\mathbf u_{\mathrm
S}\cdot\boldsymbol\nu - R^h_D(\boldsymbol\Pi^h_{\mathrm
S}\mathbf u_{\mathrm S}\cdot\boldsymbol\nu)\|_{H_{00}^{-1/2}(\Sigma)} \leq 
\|\mathbf u_{\mathrm
S}\cdot\boldsymbol\nu - R^h_D(\mathbf u_{\mathrm S}\cdot\boldsymbol\nu)\|_{H_{00}^{-1/2}(\Sigma)} + \\
\|\mathbf u_{\mathrm
S}\cdot\boldsymbol\nu -  \boldsymbol\Pi^h_{\mathrm
S}\mathbf u_{\mathrm S}\cdot\boldsymbol\nu \|_{H_{00}^{-1/2}(\Sigma)} + \|(\text{id} - R^h_D) 
(\mathbf u_{\mathrm
S}\cdot\boldsymbol\nu -  \boldsymbol\Pi^h_{\mathrm
S}\mathbf u_{\mathrm S}\cdot\boldsymbol\nu)\|_{H_{00}^{-1/2}(\Sigma)}
\end{multline*}
and by using  \eqref{eq:mia2} twice 
\[
 \|\mathbf u_{\mathrm
S}\cdot\boldsymbol\nu - R^h_D(\boldsymbol\Pi^h_{\mathrm
S}\mathbf u_{\mathrm S}\cdot\boldsymbol\nu)\|_{H_{00}^{-1/2}(\Sigma)}\lesssim h^{1/2} \, \|\mathbf u_{\mathrm
S}\cdot\boldsymbol\nu - R^h_D(\mathbf u_{\mathrm S}\cdot\boldsymbol\nu)\|_\Sigma + 
\| 
\mathbf u_{\mathrm
S} -  \boldsymbol\Pi^h_{\mathrm
S}\mathbf u_{\mathrm S}\|_\Sigma.
\]
The result follows by applying the trace theorem in $\mathbf{H}^1(\Omega_{\mathrm{S}})$ and combining the resulting estimate 
with \eqref{eq:mia0}.
\end{proof}

We are now in a position to establish our main result. 
\begin{theorem}\label{thm:main}
Hypotheses \ref{hyp:1} and \ref{hyp:2} imply unique solvability of
the discrete equations \eqref{eq:2.0} and the error estimate 
\begin{multline*}
\|\mathbf u-\mathbf u_h\|_{\mathbb
X}+\|p-p_h\|_{\Omega_{\mathrm
D}}+\|p+\delta-(p_h+\delta_h)\|_{\Omega_{\mathrm S}}
\lesssim  \\  \inf_{\mathbf
u^h_{\mathrm S} \in \mathbf H^h_{\mathrm S}(\Omega_{\mathrm S})} \|
\mathbf u_{\mathrm S}-\mathbf u^h_{\mathrm S}\|_{\mathbf
H^1(\Omega_{\mathrm S})} + \inf_{\mathbf u^h_{\mathrm D}\in \mathbf
H^h_{\mathrm D}(\Omega_{\mathrm D})} \| \mathbf u_{\mathrm
D}-\mathbf u^h_{\mathrm D}\|_{\mathbf
H(\mathrm{div},\Omega_{\mathrm D})}+ 
\inf_{q_h \in
L^2_\star(\Omega)}\| p-q_h\|_\Omega \\ + \lambda(h) \, \left( \|p_{\mathrm{D}} - R^h_D p_{\mathrm{D}} \|_\Sigma + 
\|\mathbf u_{\mathrm S}\cdot\boldsymbol\nu-R^h_D(\mathbf
u_{\mathrm S}\cdot\boldsymbol\nu)\|_{\Sigma}
\right),
\end{multline*}
where $\lambda(h)\equiv 0$ if $\Phi_{\mathrm S}^h \subset \Phi_{\mathrm D}^h$ and 
$
\mathrm \lambda(h) \lesssim h^{1/2} 
$
otherwise.
\end{theorem}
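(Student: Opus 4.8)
The plan is to obtain the stated estimate by simply chaining together the two results already established, namely the Strang-type estimate of Proposition \ref{prop:5.1} and the approximation bound for $\mathbb X^h$ of Proposition \ref{prop:5.4}. Unique solvability of \eqref{eq:2.0} requires no separate argument, since it is already guaranteed by Proposition \ref{prop:5.1}, which rests on Hypotheses \ref{hyp:1} and \ref{hyp:2}. Thus the only task is to massage the right-hand side of the Strang estimate into the form claimed in the theorem.

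First I would recall the Strang estimate of Proposition \ref{prop:5.1}, which bounds the combined velocity/pressure error by $\inf_{\mathbf v_h\in \mathbb X^h}\|\mathbf u-\mathbf v_h\|_{\mathbb X}+\inf_{q_h\in L^2_\star(\Omega)}\|p-q_h\|_\Omega+\mathrm C_h(p_{\mathrm D})$. The middle term is already exactly the pressure approximation term appearing in the theorem, so it can be carried over unchanged. Next I would invoke Proposition \ref{prop:5.4}, which uses Hypothesis \ref{hyp:2}(b), to replace the best-approximation term $\inf_{\mathbf v_h\in \mathbb X^h}\|\mathbf u-\mathbf v_h\|_{\mathbb X}$ by the sum of the two separate best-approximation errors over $\mathbf H^h_{\mathrm S}(\Omega_{\mathrm S})$ and $\mathbf H^h_{\mathrm D}(\Omega_{\mathrm D})$, together with the penalty contribution $\lambda(h)\,\|\mathbf u_{\mathrm S}\cdot\boldsymbol\nu-R^h_D(\mathbf u_{\mathrm S}\cdot\boldsymbol\nu)\|_\Sigma$. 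This furnishes the first two infima and the $\mathbf u_{\mathrm S}\cdot\boldsymbol\nu$-part of the boundary penalty in the statement.

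The last step is to absorb the consistency error $\mathrm C_h(p_{\mathrm D})$. Here I would use the explicit bound provided by Proposition \ref{prop:5.1}: in the conforming case $\Phi^h_{\mathrm S}\subset\Phi^h_{\mathrm D}$ it vanishes, while otherwise $\mathrm C_h(p_{\mathrm D})\lesssim h^{1/2}\|p_{\mathrm D}-R^h_D p_{\mathrm D}\|_\Sigma$. The only point where any care is needed is the bookkeeping observation that the penalty prefactor $\lambda(h)$ of Proposition \ref{prop:5.4} is governed by exactly the same dichotomy, being $0$ in the conforming case and $\lesssim h^{1/2}$ otherwise. Consequently the two boundary contributions share a common factor $\lambda(h)$ and may be collected as $\lambda(h)\bigl(\|p_{\mathrm D}-R^h_D p_{\mathrm D}\|_\Sigma+\|\mathbf u_{\mathrm S}\cdot\boldsymbol\nu-R^h_D(\mathbf u_{\mathrm S}\cdot\boldsymbol\nu)\|_{\Sigma}\bigr)$.

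Assembling the three groups of terms — the two velocity infima, the pressure infimum, and the collected $\lambda(h)$-weighted boundary terms — yields precisely the claimed estimate. Since every ingredient is already in place, I do not anticipate any substantive obstacle; the proof is essentially a transcription of Proposition \ref{prop:5.1} followed by insertion of Proposition \ref{prop:5.4} and a regrouping of the two consistency/penalty contributions under the single factor $\lambda(h)$.
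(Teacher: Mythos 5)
Your proposal is correct and coincides with the paper's intended argument: the paper states Theorem \ref{thm:main} immediately after Proposition \ref{prop:5.4} with no separate proof, precisely because it follows by inserting the approximation bound of Proposition \ref{prop:5.4} into the Strang estimate of Proposition \ref{prop:5.1} and noting that the consistency term $\mathrm C_h(p_{\mathrm D})$ and the penalty factor $\lambda(h)$ obey the same conforming/nonconforming dichotomy. Your bookkeeping observation that the two boundary contributions can be collected under the single factor $\lambda(h)$ is exactly the regrouping the paper performs implicitly.
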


\section{The lifting of the normal trace}\label{sec:5}
Theorem \ref{thm:main} shows that, given a set of two mixed finite elements 
that are separately convergent for the Darcy and the Stokes problems, 
the only requirement for the convergence of scheme \eqref{eq:2.0} is  
Hypothesis \ref{hyp:2}. In this section, we discuss the conditions under which Hypothesis \ref{hyp:2}(b) 
is satisfied for the most common finite element subspaces of $\mathbf{H}(\mathrm{div}, \Omega_{\mathrm{D}})$. 

We begin by proving the existence of a stable lifting of the normal trace for Raviart-Thomas and 
Brezzi-Douglas-Marini spaces in two dimensions. The only restriction on the grid is shape-regularity. 
For the sake of clarity, we define all needed elements here. The geometric elements are a polygonal domain 
$\Omega\subset \mathbb R^2$ with boundary $\Gamma$, a shape-regular family of triangulations $\mathcal T_h$ , 
and the partitions $\Gamma_h$ of $\Gamma$ that are inherited from $\mathcal T_h$. The space of vector discrete 
vector fields is the following:
\[
\mathbf H^h:=\{ \mathbf v_h\in \mathbf H(\mathrm{div},\Omega)\,:\,
\mathbf v|_T \in \mathbf P(T)\quad\forall T\in \mathcal T_h\},
\]
where
\[
\mathbf P(T):=\left\{\begin{array}{ll} \mathbb P_0(T)^2+\mathrm{span}\,\{ \mathbf x\}, & \mbox{if $k=0$},\\
\mathbb P_k(T)^2, &\mbox{if $k\ge 1$}.\end{array}\right.
\]
Note that $\mathbf H^h$ is the velocity space for the  Raviart-Thomas pair when $k=0$ and for the BDM pair 
for $k\ge 1$. The latter is a strict subspace of the RT space of the same order.
The space of normal traces is
\[
\Phi^h:=\{ \xi_h:\Gamma \to \mathbb R\,:\, \xi_h|_e \in \mathbb P_k(e)\quad\forall e\in \Gamma_h\}.
\]
It is clear that $\mathbf u_h\cdot\boldsymbol\nu\in \Phi_h$ for all $\mathbf u_h \in \mathbf H^h$.

\begin{theorem} There exists $\mathbf L^h:\Phi^h\to \mathbf H^h$ such that
\begin{equation}\label{eq:8.1}
(\mathbf L^h\xi_h)\cdot\boldsymbol\nu=\xi_h \quad \text{and} \quad  \|\mathbf L^h\xi_h\|_{\mathbf H(\mathrm{div},\Omega)}
\lesssim \|\xi_h\|_{H^{-1/2}(\Gamma)} \qquad \forall \xi_h \in \Phi^h.
\end{equation}
In addition,
\begin{equation}\label{eq:8.2}
\mathrm{div} \mathbf L^h \xi_h = \frac1{|\Omega|} \int_\Gamma \xi_h.
\end{equation}
\end{theorem}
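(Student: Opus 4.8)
The plan is to construct $\mathbf{L}^h$ by reducing the problem to a collection of local constructions near the boundary, exploiting the fact that in two dimensions shape-regularity alone controls all the relevant geometry. First I would fix $\xi_h \in \Phi^h$ and observe that by scaling we may work element-by-element near $\Gamma$. For each boundary edge $e \in \Gamma_h$ belonging to a triangle $T$, I would build a local field $\mathbf{w}_T \in \mathbf{P}(T)$ supported on $T$ whose normal trace on $e$ equals $\xi_h|_e$ and whose normal trace on the two interior edges of $T$ vanishes. This is a finite-dimensional solvability statement on the reference triangle: the map $\mathbf{P}(\widehat{T}) \ni \mathbf{v} \mapsto (\mathbf{v}\cdot\boldsymbol\nu|_{e_1}, \mathbf{v}\cdot\boldsymbol\nu|_{e_2}, \mathbf{v}\cdot\boldsymbol\nu|_{e_3})$ is surjective onto $\prod_i \mathbb{P}_k(e_i)$ for both the RT$_0$ and BDM$_k$ choices, by the standard unisolvence of these elements. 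Choosing the preimage orthogonal to the kernel gives a bounded local inverse, and assembling $\sum_T \mathbf{w}_T$ yields a global $\mathbf{H}(\mathrm{div})$-conforming field because the interior normal traces cancel and only the prescribed boundary normal trace survives.

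Next I would establish the norm bound in \eqref{eq:8.1}. The natural estimate coming out of the local construction and a scaling argument controls $\|\mathbf{L}^h \xi_h\|_{\mathbf{H}(\mathrm{div},\Omega)}$ by a weighted $L^2$-type quantity on $\Gamma$, essentially $\big(\sum_e h_e \|\xi_h\|_{L^2(e)}^2\big)^{1/2}$. To convert this into the required $H^{-1/2}(\Gamma)$ bound, I would invoke a discrete inverse/trace inequality valid for piecewise polynomials on a shape-regular boundary mesh, namely that for $\xi_h \in \Phi^h$ one has $\big(\sum_e h_e \|\xi_h\|_{L^2(e)}^2\big)^{1/2} \lesssim \|\xi_h\|_{H^{-1/2}(\Gamma)}$. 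This is the step where the two-dimensional setting and shape-regularity are genuinely used, since the boundary is one-dimensional and the relevant inverse estimate holds on locally quasi-uniform one-dimensional meshes that are automatically produced by a shape-regular triangulation of a polygon; it is precisely this that fails in three dimensions without the extra quasi-uniformity hypothesis mentioned in the introduction.

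Finally I would address the divergence identity \eqref{eq:8.2}. I would modify the construction so that the divergence of $\mathbf{L}^h \xi_h$ is the constant $\frac{1}{|\Omega|}\int_\Gamma \xi_h$ rather than an arbitrary piecewise polynomial. The clean way is to decompose $\xi_h = \xi_h^0 + c\,\mathbf{1}$, where $c = \frac{1}{|\Gamma|}\int_\Gamma \xi_h$ carries the mean and $\xi_h^0$ has zero integral; for the zero-mean part I would arrange a divergence-free lifting, while the constant part is handled by an explicit global field (for instance a suitably scaled multiple of the position vector $\mathbf{x}$, which lies in $\mathbf{P}(T)$ for $k=0$ and is polynomial of degree one for $k\ge 1$) whose divergence is the appropriate constant and whose normal trace adjusts correctly. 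Then $\mathrm{div}\,\mathbf{L}^h\xi_h = \frac{1}{|\Omega|}\int_\Omega \mathrm{div}\,\mathbf{L}^h\xi_h = \frac{1}{|\Omega|}\int_\Gamma \xi_h$ by the divergence theorem, which both confirms \eqref{eq:8.2} and shows the constant is forced.

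The main obstacle I anticipate is the norm estimate rather than the algebraic construction: producing a local lifting with the correct normal trace is routine finite-element unisolvence, but proving that the assembled field is bounded in the fractional-order norm $H^{-1/2}(\Gamma)$ uniformly in $h$ requires the interplay between the local scaling behavior and the global fractional Sobolev norm on $\Gamma$, and keeping this bound $h$-independent on general shape-regular (not quasi-uniform) meshes is the delicate point. I expect this is where the authors will either appeal to a known discrete lifting result or carry out a careful duality/interpolation argument on the boundary mesh.
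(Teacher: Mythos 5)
There is a genuine gap, and it is fatal to the construction as you set it up: your local, element-supported lifting cannot satisfy the $\mathbf H(\mathrm{div},\Omega)$ bound, and the failure is in the divergence, not in the fractional-norm conversion you single out as the delicate point. If $\mathbf w_T\in\mathbf P(T)$ has normal trace $\xi_h$ on the boundary edge $e\subset\partial T$ and zero normal trace on the two interior edges, the divergence theorem forces $\int_T\mathrm{div}\,\mathbf w_T=\int_e\xi_h$: the flux entering through $e$ has nowhere to go and must be absorbed as divergence inside the single element $T$. Hence $\|\mathrm{div}\,\mathbf w_T\|_{L^2(T)}\ge|T|^{-1/2}\bigl|\int_e\xi_h\bigr|$, which by shape regularity scales like $h_e^{-1/2}\|\xi_h\|_{L^2(e)}$, not $h_e^{1/2}\|\xi_h\|_{L^2(e)}$. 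Take $\xi_h\equiv1$ (an admissible element of $\Phi^h$): each boundary element then contributes $\|\mathrm{div}\,\mathbf w_T\|_{L^2(T)}\gtrsim1$, the supports are disjoint, so the assembled field has $\|\mathrm{div}\,\mathbf L^h\xi_h\|_{\Omega}\gtrsim h^{-1/2}\to\infty$ while $\|\xi_h\|_{H^{-1/2}(\Gamma)}$ stays fixed. (The identity \eqref{eq:8.2} already announces this obstruction: the divergence of an admissible lifting must be spread as a constant over all of $\Omega$, which a field supported in a boundary strip cannot achieve.) Your two proposed repairs do not close the gap. A divergence-free lifting of the zero-mean part $\xi_h^0$ cannot be built element-by-element either, because $\int_\Gamma\xi_h^0=0$ is a purely global cancellation: each individual edge still carries nonzero flux $\int_e\xi_h^0\neq0$, which forces element-wise divergence exactly as above; the flux must be routed tangentially across many elements, which no construction with element-local supports can do. And the constant part cannot be lifted by a multiple of $\mathbf x$: on a polygon, $\mathbf x\cdot\boldsymbol\nu$ is constant on each edge but equals the signed distance from the origin to the line containing that edge, so it takes different values on different edges and is not a lifting of the constant function on $\Gamma$.

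The paper's proof is built precisely around these obstructions, and both of its ingredients are global. It uses your decomposition $\xi_h=c_h+\xi_h^0$, but then: (i) the constant $c_h$ is lifted by solving the Neumann problem $\Delta u=|\Omega|^{-1}$, $\partial_{\boldsymbol\nu}u=|\Gamma|^{-1}$, setting $\mathbf w=\nabla u$ (which lies in $\mathbf H^{1/2+\varepsilon}(\Omega)$ by elliptic regularity on a polygon) and taking its lowest-order Raviart--Thomas interpolant $\mathbf w_h$; this gives exactly $\mathbf w_h\cdot\boldsymbol\nu\equiv|\Gamma|^{-1}$ and $\mathrm{div}\,\mathbf w_h\equiv|\Omega|^{-1}$ (whence \eqref{eq:8.2}), with an $h$-independent bound; (ii) the zero-mean part is lifted divergence-free by a stream-function argument: its arc-length antiderivative $D^{-1}\xi_h^0$ belongs to $H^{1/2}(\Gamma)$ and is a continuous piecewise $\mathbb P_{k+1}$ function on $\Gamma_h$, one extends it to $H^1(\Omega)$, applies the Scott--Zhang interpolant $S_h$ (which is $H^1$-stable and preserves discrete boundary traces) and takes the rotated gradient $\nabla^\top$. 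The resulting field is piecewise $\mathbb P_k$, $\mathbf H(\mathrm{div})$-conforming, divergence-free, has normal trace $(D^{-1}\xi_h^0)'=\xi_h^0$, and is bounded by $\|D^{-1}\xi_h^0\|_{H^{1/2}(\Gamma)}\lesssim\|\xi_h^0\|_{H^{-1/2}(\Gamma)}$, so no discrete inverse inequality on $\Gamma$ is ever needed. This also relocates where the two-dimensional structure genuinely enters: it is the curl/stream-function mechanism and arc-length integration along the one-dimensional boundary that the paper exploits, rather than the boundary inverse estimate on which your argument hinges.
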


\begin{proof}
We start by decomposing
\begin{equation}\label{eq:8.3}
\xi_h = \frac1{|\Gamma|} \int_\Gamma\xi_h + \Big(\xi_h - \frac1{|\Gamma|} \int_\Gamma\xi_h\Big)
=c_h + \xi_h^0\in \mathbb P_0(\Gamma)\oplus \Big( \Phi_h \cap H^{-1/2}_0(\Gamma)\Big),
\end{equation}
where $H^{-1/2}_0(\Gamma):=\{\xi\in H^{-1/2}(\Gamma)\,:\, \langle \xi, 1\rangle_\Gamma =0\}$.

{\em Lifting of a constant function.} Consider the solution of the Neumann problem
\[
\Delta u \equiv |\Omega|^{-1} \quad \mbox{in $\Omega$}, \qquad \partial_{\boldsymbol\nu} u\equiv 
|\Gamma|^{-1}, \qquad \int_\Omega u =0
\]
and let $\mathbf w:=\nabla u$. From well known regularity results \cite{Gri92} it 
follows that $\mathbf w\in \mathbf H^{1/2+\varepsilon}(\Omega)$ for some $\varepsilon=
\varepsilon(\Omega)>0$. Let then $\mathbf w_h$ be the lowest order Raviart-Thomas projection of $\mathbf w$, i.e.,
\[
\mathbf w_h \in \mathbf H(\mathrm{div},\Omega), \quad \mathbf w_h \in 
\mathbb P_0(T)^2\oplus\mathrm{span}\{\mathbf x\} 
\quad \forall T \in \mathcal T_h, \quad \int_e \mathbf w_h \cdot\boldsymbol\nu_e =
\int_e \mathbf w \cdot\boldsymbol\nu_e\quad \forall e\in \mathcal E_h,
\]
where $\mathcal E_h$ is the set of edges of the triangulation and $\boldsymbol\nu_e$ is the unit normal vector on $e$ 
(for any given orientation). This projection is well defined because of the regularity of $\mathbf w$. It is then clear 
(using a direct argument or well known properties of the Raviart-Thomas element) that
\begin{equation}\label{eq:8.4}
\mathbf w_h\cdot\boldsymbol\nu\equiv |\Gamma|^{-1},\qquad \mathrm{div}\,\mathbf w_h \equiv |\Omega|^{-1}.
\end{equation}
A more delicate argument (see for instance \cite{bernardi}) shows that
\begin{equation}\label{eq:8.5}
\| \mathbf w_h\|_{\mathbf H(\mathrm{div},\Omega)}\lesssim \| \mathbf w\|_{\mathbf H(\mathrm{div},\Omega)}+
\|\mathbf w\|_{\mathbf{H}^{1/2+\varepsilon}(\Omega)}=:C_\Omega.
\end{equation}
{\em Lifting by arc-length integration.} The gist of the lifting process follows from the application of a projection on a 
space of continuous finite elements after integration of $\xi_h^0$ along the length of $\Gamma$. Consider the spaces 
\[
V^h:=\{ u_h \in \mathcal C(\overline\Omega)\,:\, u_h|_T \in \mathbb P_{k+1}(T) \quad \forall T \in \mathcal T_h\}, 
\qquad \Psi^h:=\gamma V^h=\{ \gamma u_h\,:\,u_h \in V^h\}
\]
where $\gamma$ stands here for the trace operator on $\Gamma$. 
In \cite{ScoZha90} an operator $S_h: H^1(\Omega) \to V^h$ is constructed with the following properties:
\begin{equation}\label{eq:8.6}
S_h^2 u=S_h u, \qquad \| S_h u\|_{H^1(\Omega)} \lesssim \| u\|_{H^1(\Omega)} \quad \forall u\in H^1(\Omega),
\end{equation}
and
\begin{equation}\label{eq:8.7}
\gamma u = 0 \qquad \Longrightarrow\qquad \gamma S_h u=0.
\end{equation}
Note that \eqref{eq:8.6} and \eqref{eq:8.7} imply that if
$\gamma u \in \Psi^h$, then $\gamma S_h u= \gamma u.$ (This property can be verified directly 
for the particular construction of 
Scott and Zhang \cite{ScoZha90}, or proved directly from properties \eqref{eq:8.6} and \eqref{eq:8.7}.)
Let then $D^{-1}:H^{-1/2}_0(\Gamma)\to H^{1/2}(\Gamma)$ be an inverse of the arc-length differentiation 
operator and let 
$L:H^{1/2}(\Gamma) \to H^1(\Omega)$ be a bounded right-inverse of the trace operator. 
Given $\xi_h^0\in \Phi^h \cap H^{-1/2}_0(\Gamma)$, 
we can define
\[
\mathbf v_h:=\nabla^\top S_h L D^{-1} \xi_h^0, \qquad \nabla^\top:=(\partial_y,-\partial_x).
\]
Note that $\gamma L D^{-1}\xi_h^0=D^{-1}\xi_h^0 \in \Psi^h$ and therefore $\gamma S_h  L D^{-1}\xi_h^0=D^{-1}\xi_h^0$. Hence
\begin{equation}\label{eq:8.8}
\mathbf v_h \cdot\boldsymbol\nu=( \gamma S_h L D^{-1} \xi_h^0)'=(D^{-1} \xi_h^0)'=\xi_h^0.
\end{equation}
It is also clear that $\mathbf v_h$ is piecewise $\mathbb P_k$ and that its divergence vanishes. 
Therefore $\mathbf v_h \in \mathbf H^h$. 
Finally,
\begin{multline}\label{eq:8.9}
\| \mathbf v_h\|_{\mathbf H(\mathrm{div},\Omega)}=\|\mathbf v_h\|_{\Omega}\le \| S_h L D^{-1} \xi_h^0\|_{H^1(\Omega)}\lesssim 
\| L D^{-1} \xi_h^0\|_{H^1(\Omega)}\\
\lesssim \| D^{-1}\xi_h^0\|_{H^{1/2}(\Gamma)} \lesssim \|\xi_h^0\|_{H^{-1/2}(\Gamma)}.
\end{multline}
{\em Conclusion.} The full lifting operator is then given by the expression
\[
\mathbf L^h \xi_h := \Big(\int_\Gamma \xi_h \Big)\mathbf w_h + \nabla^\top S_h L D^{-1} 
\Big( \xi_h - \frac1{|\Gamma|}\int_\Gamma \xi_h\Big),
\]
using the discrete lifting of a constant given above \eqref{eq:8.4}-\eqref{eq:8.5}.
By \eqref{eq:8.4} and \eqref{eq:8.8}, it follows that
\[
(\mathbf L^h\xi_h)\cdot\boldsymbol\nu= \frac1{|\Gamma|}\int_\Gamma 	\xi_h +  
\Big( \xi_h - \frac1{|\Gamma|}\int_\Gamma \xi_h\Big)=\xi_h.
\]
Also, by \eqref{eq:8.5} and \eqref{eq:8.9}, it follows that
\[
\| \mathbf L^h\xi_h\|_{\mathbf H(\mathrm{div},\Omega)} \lesssim \left| \int_\Gamma \xi_h\right| + 
\|\xi_h\|_{H^{-1/2}(\Gamma)}\lesssim  \|\xi_h\|_{H^{-1/2}(\Gamma)}.
\]
Finally, by \eqref{eq:8.4}
\[
\mathrm{div}\mathbf L^h \xi_h= \Big(\int_\Gamma \xi_h\Big)
\mathrm{div}\,\mathbf w_h\equiv  \frac1{|\Omega|} \int_\Gamma \xi_h,
\]
which finishes the proof.
\end{proof}

\begin{remark}
In the three dimensional case a stable lifting of the normal trace can be constructed 
for RT and BDM elements using an additional 
hypothesis on $\mathcal T_h$, namely, that $\mathcal T_h$ is quasiuniform on a neighborhood of 
$\Gamma$. A proof for RT of any order 
in two dimensions is given in \cite{GatOyaSayTA}: the proof holds for three dimensions and for 
BDM elements.
\end{remark}

\section{The discrete permeability condition}\label{sec:6}
In this section we discuss if 
Hypothesis \ref{hyp:2}(a) is available in practical situations. This condition 
can be understood as the possibility of having flow across $\Sigma$ without the need of an
increasingly large velocity field, that is, the discrete boundary
created by the choice of spaces has to be permeable enough. Let us first 
show that Hypothesis \ref{hyp:2}(a) can be easily deduced from Hypothesis \ref{hyp:2}(b).
\begin{proposition}\label{prop:mia}
 Assume that Hypothesis \ref{hyp:2}(b) is satisfied and that 
\[
\omega(h) := \inf_{\mathbf
u^h_{\mathrm S} \in \mathbf H^h_{\mathrm S}(\Omega_{\mathrm S})} \|
\mathbf u_{\mathrm S}-\mathbf u^h_{\mathrm S}\|_{\mathbf
H^1(\Omega_{\mathrm S})} + \inf_{\mathbf u^h_{\mathrm D}\in \mathbf
H^h_{\mathrm D}(\Omega_{\mathrm D})} \| \mathbf u_{\mathrm
D}-\mathbf u^h_{\mathrm D}\|_{\mathbf
H(\mathrm{div},\Omega_{\mathrm D})}                                                               
\]
converges to zero when the parameter $h$ tends to zero. Then, there exists $h_0>0$ such that 
Hypothesis \ref{hyp:2}(a) is satisfied for all $h\leq h_0$
\end{proposition}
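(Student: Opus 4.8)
The plan is to transfer the continuous permeability encoded in the inf-sup condition of $b_2$ down to the discrete level by approximation, exploiting that the flux functional $\mathbf w\mapsto\langle\mathbf w\cdot\boldsymbol\nu,1\rangle_\Sigma$ is continuous on $\mathbf H^1(\Omega_{\mathrm S})$ through the trace theorem. First I would fix, once and for all (independently of $h$), an element $\mathbf u=(\mathbf u_{\mathrm S},\mathbf u_{\mathrm D})\in\mathbb X$ with $c:=\langle\mathbf u_{\mathrm S}\cdot\boldsymbol\nu,1\rangle_\Sigma>0$; its existence is exactly what is produced in the proof of Proposition \ref{prop:1.2}, see \eqref{eq:1.3} (indeed one may take $\mathbf u\in\mathbf H^1_0(\Omega)$, so that $\mathbf u_{\mathrm S}\in\mathbf H^1_{\mathrm S}(\Omega_{\mathrm S})$ and $\mathbf u_{\mathrm D}\in\mathbf H_{\mathrm D}(\mathrm{div},\Omega_{\mathrm D})$ with continuous normal trace across $\Sigma$). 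The quantity $\omega(h)$ appearing in the statement is then precisely the subdomain-wise approximation error attached to this fixed $\mathbf u$.

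Next, since Hypothesis \ref{hyp:2}(b) is in force, Proposition \ref{prop:5.4} applies to $\mathbf u$ and yields a sequence $\mathbf u_h=(\mathbf u^h_{\mathrm S},\mathbf u^h_{\mathrm D})\in\mathbb X^h$ with
\[
\|\mathbf u-\mathbf u_h\|_{\mathbb X}\lesssim \omega(h)+\lambda(h)\,\|\mathbf u_{\mathrm S}\cdot\boldsymbol\nu-R^h_D(\mathbf u_{\mathrm S}\cdot\boldsymbol\nu)\|_\Sigma.
\]
The second term tends to zero because $\lambda(h)\lesssim h^{1/2}$ while $\|\mathbf u_{\mathrm S}\cdot\boldsymbol\nu-R^h_D(\mathbf u_{\mathrm S}\cdot\boldsymbol\nu)\|_\Sigma\le\|\mathbf u_{\mathrm S}\cdot\boldsymbol\nu\|_\Sigma$ stays bounded (as $R^h_D$ is an $L^2(\Sigma)$-orthogonal projection), and the first tends to zero by the standing assumption $\omega(h)\to 0$. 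Hence $\|\mathbf u_{\mathrm S}-\mathbf u^h_{\mathrm S}\|_{\mathbf H^1(\Omega_{\mathrm S})}\to 0$, and I would simply take $\mathbf v_h:=\mathbf u^h_{\mathrm S}\in\mathbf H^h_{\mathrm S}(\Omega_{\mathrm S})$ as the candidate field for \eqref{eq:2.3}.

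It remains to read off the two bounds from this convergence. For the flux, the trace theorem in $\mathbf H^1(\Omega_{\mathrm S})$ gives
\[
|\langle(\mathbf u_{\mathrm S}-\mathbf u^h_{\mathrm S})\cdot\boldsymbol\nu,1\rangle_\Sigma|\le|\Sigma|^{1/2}\,\|(\mathbf u_{\mathrm S}-\mathbf u^h_{\mathrm S})\cdot\boldsymbol\nu\|_\Sigma\lesssim\|\mathbf u_{\mathrm S}-\mathbf u^h_{\mathrm S}\|_{\mathbf H^1(\Omega_{\mathrm S})}\longrightarrow 0,
\]
so that $\langle\mathbf v_h\cdot\boldsymbol\nu,1\rangle_\Sigma\ge c-|\langle(\mathbf u_{\mathrm S}-\mathbf u^h_{\mathrm S})\cdot\boldsymbol\nu,1\rangle_\Sigma|\ge c/2$ once $h$ is small enough; simultaneously $\|\mathbf v_h\|_{\mathbf H^1(\Omega_{\mathrm S})}\le\|\mathbf u_{\mathrm S}\|_{\mathbf H^1(\Omega_{\mathrm S})}+1$ for such $h$. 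Choosing $h_0$ so that both estimates hold for every $h\le h_0$ and setting $\beta_0:=c/2$ and $\beta_1:=\|\mathbf u_{\mathrm S}\|_{\mathbf H^1(\Omega_{\mathrm S})}+1$ then delivers Hypothesis \ref{hyp:2}(a) on the range $h\le h_0$.

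The only delicate conceptual point is that the lower bound on the flux is a \emph{persistence} statement: a strictly positive quantity survives an $o(1)$ perturbation but not an arbitrary one, which is exactly why the conclusion is restricted to $h\le h_0$ rather than all $h$. Everything else is a routine combination of the trace theorem with the convergence $\|\mathbf u-\mathbf u_h\|_{\mathbb X}\to 0$ furnished by Proposition \ref{prop:5.4}, and no new stability estimate is required.
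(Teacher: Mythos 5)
Your proof is correct and follows essentially the same route as the paper: fix a flux-carrying field $\mathbf u\in\mathbf H^1_0(\Omega)$ as in \eqref{eq:1.3}, apply Proposition \ref{prop:5.4} (available thanks to Hypothesis \ref{hyp:2}(b)) to get discrete approximants converging to $\mathbf u$ in $\mathbb X$, and conclude by persistence of the positive flux and of the norm bound for $h$ small. The only cosmetic differences are that you keep the constant $c$ and the Stokes component $\mathbf u^h_{\mathrm S}$ explicit, whereas the paper normalizes $\|\mathbf u\|_{\mathbb X}\le 1$ and states the bounds for the full pair $\mathbf u_h\in\mathbb X^h$.
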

\begin{proof}
 Let $\mathbf{u}\in \mathbf{H}^1_0(\Omega)$ be such that 
\[
 \|\mathbf{u}\|_{\mathbb{X}} \leq 1 \qquad \text{and}\qquad 
\langle\mathbf{u}\cdot \boldsymbol{\nu}, 1\rangle_\Sigma \geq 1.
\]
With the notations of Section \ref{sec:4} we define  
\[
\mathbf u_h\equiv (\mathbf u^h_{\mathrm S},\mathbf u^h_{\mathrm
D}):=(\boldsymbol\Pi^h_{\mathrm S} \mathbf u_{\mathrm S},
\boldsymbol\Pi^h_{\mathrm D}\mathbf u_{\mathrm D}-\mathbf
L^h(\boldsymbol\Pi^h_{\mathrm D}\mathbf u_{\mathrm
D}\cdot\boldsymbol\nu-R^h_D(\boldsymbol\Pi^h_{\mathrm S}\mathbf
u_{\mathrm S}\cdot\boldsymbol\nu))),
\]
where $\mathbf u_{\mathrm D} := \mathbf{u}|_{\Omega_{\mathrm D}}$ and 
$\mathbf u_{\mathrm S} := \mathbf{u}|_{\Omega_{\mathrm S}}$. We know from Proposition \ref{prop:5.4} that 
\[
\| \mathbf
u-\mathbf u_h\|_{\mathbb X}
 \lesssim \omega(h) + \,\lambda(h)\, \|\mathbf u_{\mathrm S}\cdot\boldsymbol\nu-R^h_D(\mathbf
u_{\mathrm S}\cdot\boldsymbol\nu)\|_{\Sigma},
\]
with $\lambda(h) \lesssim h^{1/2}$. It follows that 
\[
 \| \mathbf{u}_h \|_{\mathbb{X}} \leq \| \mathbf{u}_h \|_{\mathbb{X}} + \| \mathbf u-\mathbf u_h\|_{\mathbb X} \leq 2 
\]
and 
\[
\langle\mathbf{u}_h\cdot \boldsymbol{\nu}, 1\rangle_\Sigma
=  
\langle\mathbf{u}\cdot \boldsymbol{\nu}, 1\rangle_\Sigma -
\langle(\mathbf{u} - \mathbf{u}_h)\cdot \boldsymbol{\nu}, 1\rangle_\Sigma 
\gtrsim 1 - \| \mathbf u-\mathbf u_h\|_{\mathbb X} \gtrsim 1/2
\]
if $h$ is sufficiently small.
\end{proof}

Next, we will see that Hypothesis \ref{hyp:2}(a) is actually quite mild, it can be shown to
hold on (independently from Hypothesis \ref{hyp:2}(b)) with few restrictions.  
In some of the forthcoming arguments and examples we will use the space of continuous finite elements:
\[
\mathbf P^{\mathrm{cont}}_{k}(\mathcal T^h_{\mathrm S}):=\{ \mathbf v \in \mathcal C(\overline{\Omega}_{\mathrm S})^d\,:\, 
\mathbf v|_T \in \mathbb P_k(T) \quad \forall T \in \mathcal T^h_{\mathrm S}\}.
\]

\begin{proposition}\label{prop:3.1}
In the two dimensional case, assume that:
\begin{itemize}
\item[{\rm (a)}] either $\Sigma$ is a polygon with two or more edges,
\item[{\rm (b)}] or $\Sigma$ is a line segment and there exists a fixed 
node $\mathbf p$ belonging to all partitions $\Sigma^h_{\mathrm S}$ such that the distance of $\mathbf p$ to
$\partial\Omega$ remains bounded below.
\end{itemize}
Additionally, assume that
\begin{equation}\label{eq:3.1}
\mathbf P^{\mathrm{cont}}_{1}(\mathcal T^h_{\mathrm S}) \subset \mathbf
H^h(\Omega_{\mathrm S}).
\end{equation}
Then Hypothesis \ref{hyp:2}(a) is satisfied.
\end{proposition}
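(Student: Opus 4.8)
The plan is to exhibit, for every $h$, a single field $\mathbf v_h\in\mathbf H^h_{\mathrm S}(\Omega_{\mathrm S})$ obtained as the Lagrange interpolant of one fixed, mesh-independent velocity. First I would construct a Lipschitz field $\mathbf v\in\mathbf W^{1,\infty}(\Omega_{\mathrm S})\cap\mathbf H^1_{\mathrm S}(\Omega_{\mathrm S})$ that vanishes on $\Gamma_{\mathrm S}$ and whose restriction to $\Sigma$ is continuous, vanishes at the two endpoints of $\Sigma$, and is affine on each (straight) edge of $\Sigma$. I would then set $\mathbf v_h:=I^h\mathbf v$, the nodal interpolant onto $\mathbf P^{\mathrm{cont}}_1(\mathcal T^h_{\mathrm S})$. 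By \eqref{eq:3.1} this interpolant lies in $\mathbf H^h(\Omega_{\mathrm S})$, and since $\mathbf v$ vanishes at every node on $\Gamma_{\mathrm S}$ the interpolant vanishes there as well, so $\mathbf v_h\in\mathbf H^h_{\mathrm S}(\Omega_{\mathrm S})$.

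For the norm bound I would invoke the $\mathbf H^1$-stability of nodal interpolation on shape-regular meshes: as $\mathbf v\in\mathbf W^{1,\infty}(\Omega_{\mathrm S})$ one has $\|\mathbf v_h\|_{\mathbf H^1(\Omega_{\mathrm S})}\lesssim\|\mathbf v\|_{\mathbf W^{1,\infty}(\Omega_{\mathrm S})}=:\beta_1$, independent of $h$. The crucial point is the flux $\langle\mathbf v_h\cdot\boldsymbol\nu,1\rangle_\Sigma$. On each edge $e$ of $\Sigma$ the normal $\boldsymbol\nu$ is constant and $\mathbf v_h|_e$ is the one-dimensional nodal interpolant of $\mathbf v|_e$; hence $\mathbf v_h\cdot\boldsymbol\nu|_e=I^h_e(\mathbf v\cdot\boldsymbol\nu|_e)$. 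Since $\mathbf v\cdot\boldsymbol\nu$ is affine on $e$ and the induced partition $\Sigma^h_{\mathrm S}$ refines that edge, the interpolant reproduces it exactly, giving $\langle\mathbf v_h\cdot\boldsymbol\nu,1\rangle_\Sigma=\langle\mathbf v\cdot\boldsymbol\nu,1\rangle_\Sigma$ for every $h$. This exact identity is what upgrades the bound from ``$h$ small'' to ``all $h$'', and it is precisely where the geometric hypotheses enter: the fixed interior vertex common to all meshes (the corner of $\Sigma$ under (a), the node $\mathbf p$ under (b)) ensures that the affine-per-edge structure of $\mathbf v\cdot\boldsymbol\nu$ is respected by every $\Sigma^h_{\mathrm S}$.

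It remains to design $\mathbf v$ with $\langle\mathbf v\cdot\boldsymbol\nu,1\rangle_\Sigma>0$. In case (b) I would take $\mathbf v\cdot\boldsymbol\nu|_\Sigma$ to be the hat function equal to $1$ at $\mathbf p$ and $0$ at the endpoints, so that $\langle\mathbf v\cdot\boldsymbol\nu,1\rangle_\Sigma=\tfrac12|\Sigma|>0$; the interior node is indispensable here, since an affine function on a single segment vanishing at both endpoints is identically zero, which explains why a straight $\Sigma$ requires the extra assumption. In case (a), with adjacent edges $E_1,E_2$ meeting at the corner $\mathbf c$ and carrying the distinct constant normals $\boldsymbol\nu_1,\boldsymbol\nu_2$, I would prescribe the nodal values $\mathbf v(\mathbf c)=\boldsymbol\nu_1+\boldsymbol\nu_2$ and $\mathbf v=\mathbf 0$ at the endpoints of $\Sigma$, interpolating linearly along each edge; then $\langle\mathbf v\cdot\boldsymbol\nu,1\rangle_\Sigma=\tfrac12(|E_1|+|E_2|)(1+\boldsymbol\nu_1\cdot\boldsymbol\nu_2)>0$ because adjacent edges of a genuine polygon satisfy $\boldsymbol\nu_1\neq-\boldsymbol\nu_2$. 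In both cases a Lipschitz extension of $\mathbf v$ into $\Omega_{\mathrm S}$ vanishing on $\Gamma_{\mathrm S}$ exists because the prescribed trace already vanishes at the junction points $\Sigma\cap\Gamma_{\mathrm S}$.

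The main obstacle I anticipate is not a single estimate but the bookkeeping that makes the flux identity exact: one must verify that the trace of the two-dimensional interpolant on an edge coincides with the edge nodal interpolant (a standard property of Lagrange elements once $\mathbf v$ is continuous) and that the fixed interior vertex truly belongs to every $\Sigma^h_{\mathrm S}$, so that $\mathbf v\cdot\boldsymbol\nu$ is reproduced without error. Constructing the global Lipschitz field that matches a prescribed, possibly corner-discontinuous normal trace while vanishing on $\Gamma_{\mathrm S}$ is routine, but it must be arranged so that $\mathbf v$ \emph{itself} (not merely $\mathbf v\cdot\boldsymbol\nu$) is continuous at the corner, which is why I assign the single vector value $\boldsymbol\nu_1+\boldsymbol\nu_2$ there rather than working with the normal trace directly.
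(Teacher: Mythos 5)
Your construction is correct, and the interface bubble at its heart is exactly the paper's: the paper takes $\boldsymbol\rho=\smallfrac12(\boldsymbol\nu_1+\boldsymbol\nu_2)\psi$, with $\psi$ the hat function at the chosen corner (case (a)) or at the fixed node $\mathbf p$ (case (b)), and computes the same mesh-independent flux $\smallfrac14(1+\boldsymbol\nu_1\cdot\boldsymbol\nu_2)|\Sigma_1\cup\Sigma_2|>0$. Where you genuinely diverge is in the transfer to the discrete space. The paper does not build a Lipschitz extension and interpolate it nodally; it applies the Scott--Zhang operator to an $H^1$ lifting of the boundary data (equal to $\boldsymbol\rho$ on $\Sigma$, zero on $\Gamma_{\mathrm S}$), exploiting two of its properties: $H^1$-stability and preservation of boundary values that already lie in the discrete trace space. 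This yields $\|\mathbf v_h\|_{\mathbf H^1(\Omega_{\mathrm S})}\lesssim\|\boldsymbol\rho\|_{\mathbf H_{00}^{1/2}(\Sigma)}$ and exact reproduction of the trace, hence of the flux. Your route replaces Scott--Zhang by the Lagrange interpolant of a fixed $W^{1,\infty}$ extension, with $W^{1,\infty}$-stability of nodal interpolation on shape-regular meshes as the key lemma and the flux identity coming from exact reproduction of piecewise-affine traces whose breakpoints are mesh vertices. What each buys: yours is more elementary (no quasi-interpolation operator, no $H^{1/2}_{00}$ machinery), but it leans on the trace data being Lipschitz and on an explicit extension step; the paper's $H^{1/2}$-based bound is the one that tracks constants naturally when the hat steepens, which is the whole point of case (b) --- there the lower bound on the distance from $\mathbf p$ to $\partial\Omega$ is exactly what keeps $\|\boldsymbol\rho\|_{\mathbf H_{00}^{1/2}(\Sigma)}$ (equivalently, your Lipschitz constant, which behaves like $1/\min\{|\Sigma_1|,|\Sigma_2|\}$) bounded, a dependence you should state explicitly rather than fold into ``a single fixed field.'' Finally, the bookkeeping point you flagged, that the corner belongs to every $\Sigma^h_{\mathrm S}$ in case (a), is automatic: a straight mesh edge cannot contain a boundary corner in its relative interior, so every triangulation of $\overline\Omega_{\mathrm S}$ must place a vertex there; in case (b) it is precisely the stated assumption.
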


\begin{proof}
We start with the geometric condition (a). We choose two adjacent
edges, $\Sigma_1$ and $\Sigma_2$, of $\Sigma$ and let $\mathbf p:=
\Sigma_1\cap\Sigma_2$. We next construct the function
$\boldsymbol\rho:\Sigma \to \mathbb R^2$ given by
\[
\boldsymbol\rho:= \smallfrac12 (\boldsymbol\nu_1+\boldsymbol\nu_2)
\psi,
\]
where $\boldsymbol\nu_i$ is the normal vector on $\Sigma_i$ and
$\psi:\Sigma \to \mathbb R$ is a continuous piecewise linear
function (piecewise with respect to the natural partition of
$\Sigma$ in edges and linear with respect to the arc
parameterization) such that $\psi(\mathbf p)=1$ and $\psi$ vanishes
on all other vertices of $\Sigma$. Note first that
\begin{equation}\label{eq:3.2}
\int_\Sigma \boldsymbol\rho\cdot\boldsymbol\nu = \frac12
\int_{\Sigma_1\cup\Sigma_2}
(1+\boldsymbol\nu_1\cdot\boldsymbol\nu_2) \, \psi = \frac14
(1+\boldsymbol\nu_1\cdot\boldsymbol\nu_2)\,
|\Sigma_1\cup\Sigma_2|=:c_0>0
\end{equation}
with $c_0$ independent of any discrete quantity. We next use the
interpolation operator for non-smooth functions of Scott and Zhang
\cite{ScoZha90} (see also \cite{BreSco08} and \cite{DomSay03}) to construct $\mathbf v_h
\in \mathbf P_{1}^{\mathrm{cont}}(\mathcal T^h_{\mathrm S})$ such that
$\mathbf v_h=\boldsymbol\rho$ on $\Sigma$ and $\mathbf v_h=0$ on
$\Gamma_{\mathrm S}$. Because the Scott-Zhang lifting operator is
stable, it follows that
\begin{equation}\label{eq:3.3}
\|\mathbf v_h\|_{\mathbf{H}^1(\Omega_{\mathrm S})}\lesssim
\|\boldsymbol\rho\|_{\mathbf H_{00}^{1/2}(\Sigma)}=:C_0,
\end{equation}
where $C_0$ independent of any discrete quantities. Since
\eqref{eq:3.1} is satisfied, this proves the hypothesis.

In case (b), we can choose $\Sigma_1$ and $\Sigma_2$ to be the
segments joining $\mathbf p$ to the boundary of $\Omega$ and proceed
similarly. In this case, the quantity $C_0$ in \eqref{eq:3.3}
depends on $\mathbf p$: the hypothesis on the distance of $\mathbf
p$ to $\partial\Omega$ is equivalent to having
$\min\{|\Sigma_1|,|\Sigma_2|\}\ge c_1>0$, which can be used to give
an upper bound of $\|\boldsymbol\rho\|_{\mathbf H_{00}^{1/2}(\Sigma)}$.
\end{proof}

\begin{remark} The idea of Proposition \ref{prop:3.1} can be extended to cover more cases:
\begin{itemize}
\item[(a)] If $\mathbb P_{d}^{\mathrm{cont}}(\mathcal T^h_{\mathrm S})\subset \mathbf H^h(\Omega_{\mathrm S})$  
(as usual, $d$ is the dimension of the physical space), a simplified proof can be carried out by choosing a face 
of $\Sigma$, building a bubble function $\boldsymbol\rho$ in the normal direction and extending it to $\Omega_{\mathrm S}$  
using the Scott-Zhang interpolation method. If $\Sigma$ does not contain any triangular face, we 
have to additionally assume there exists a fixed triangle, that can always be obtained as the union of elements (triangles) 
from $\Sigma^h_{\mathrm S}$. 
\item[(b)] In the three dimensional case, with lower order polynomials, it is easy to find geometric configurations 
of $\Sigma$ that match the requirements of Proposition \ref{prop:3.1}, so that we can construct a discrete bubble 
function on an averaged normal direction on part of the interface.
\end{itemize}
\end{remark}

\section{Examples}\label{sec:7}

In this section we provide several examples of pairs of stable elements for the Stokes-Darcy problem. 
For careful description of the associated mixed elements on the Darcy domain and for stable finite elements for the Stokes 
problem, the reader is referred to \cite{BreFor91} and \cite{ErnGue04}. Original sources for these elements 
(many of them discovered in several steps and renamed several times) can be found in these already classical references. 
Spaces in the Stokes and Darcy domain will be chosen so that  Hypothesis \ref{hyp:1} is satisfied.

\subsection{The conforming case}

We next list several examples of choices where the hypotheses above are met. In all the examples below, the space for the 
Darcy domain will be the Brezzi-Douglas-Marini (BDM) space, sometimes referred to as the Brezzi-Douglas-Dur\'an-Fortin in 
the three dimensional case. For a triangulation $\mathcal T^h_{\mathrm D}$ of $\Omega_{\mathrm D}$, we consider the spaces 
for $k\ge 1$:
\begin{eqnarray*}
\mathbf H^h(\Omega_{\mathrm D})&:=&\{\mathbf u_h \in \mathbf H(\mathrm{div},\Omega_{\mathrm D})\,:\,\mathbf u_h|_T\in 
\mathbb P_{k}(T)^d \quad\forall T\in \mathcal T^h_{\mathrm D}\},\\
 L^h(\Omega_{\mathrm D})&:=&\{ p_h :\Omega_{\mathrm D}\to \mathbb R\,:\, p_h|_T \in \mathbb P_{k-1}(T) \quad\forall 
T\in \mathcal T^h_{\mathrm D}\}.
\end{eqnarray*}
We will refer to this choice with the generic name BDM($k$).
If the inherited triangulation of $\Sigma$ is denoted $\Sigma^h_{\mathrm D}$, the space $\Phi^h_{\mathrm D}$ consists 
of piecewise $\mathbb P_k$ functions. For conditions on when there is a stable discrete lifting, the reader is referred 
to Section \ref{sec:5}.

In the Stokes domain, we consider another triangulation $\mathcal T^h_{\mathrm S}$, producing a partition 
$\Sigma^h_{\mathrm S}$ of the interface. {\sl We assume that the Darcy partition $\Sigma^h_{\mathrm D}$ is either 
equal to or a refinement of $\Sigma^h_{\mathrm S}$.}  Since all discrete spaces for the velocity variable in the 
Stokes domain contain polynomial linear functions, we are going to assume that the geometry allows for Hypothesis 
\ref{hyp:2}(a) to be satisfied. 

A list of possible choices is given in Table \ref{table:1}. 

\begin{table}[h]\label{table:1}
\begin{center}
\begin{tabular}{|c|c|c||c|c|c||c|}
\hline
$\phantom{\Big|}\hspace{-0.2cm}$ Stokes & Velocity & Press. & Darcy & Vel. & Press. & Order\\
\hline
$\phantom{\Big|}\hspace{-0.2cm}$ MINI & $\mathbb P_1$+bubbles & $\mathbb P_1^\mathrm{cont}$ & BDM(1) & $\mathbb P_1$ & $\mathbb P_0$ & $h$\\
$\phantom{\Big|}\hspace{-0.2cm}$ Taylor-Hood, $k\ge 2$ & $\mathbb P_k$ & $\mathbb P_{k-1}^\mathrm{cont}$ & BDM($k$) & $\mathbb P_k$ & $\mathbb P_{k-1}$ & $h^k$\\
$\phantom{\Big|}\hspace{-0.2cm}$ Conf Crouzeix-Raviart & $\mathbb P_2$+bubbles & $\mathbb P_{1}$ & BDM(2) & $\mathbb P_2$ & $\mathbb P_1$ & $h^2$\\
$\phantom{\Big|}\hspace{-0.2cm}$ Bernardi-Raugel & $\mathbb P_1$+face bubbles & $\mathbb P_0$ & BDM(1) & $\mathbb P_1$ & $\mathbb P_0$ & $h$\\
\hline
\end{tabular}
\end{center}\caption{Coupling of Stokes elements with BDM elements. The superscript ${}^\mathrm{cont}$ refers to the demand of 
continuity for the discrete pressure space. The bubbles are used for velocities in the MINI and conformal CR elements:  an internal 
$\mathbb P_{d+1}(T)$ bubble is added to the velocity space on each element. For the BR element, face bubbles are included on all internal 
faces, but no bubbles are added on faces lying on $\Sigma$. When these bubbles (no needed for stability) are added, the method stops being 
a particular case of this class.}
\end{table}

\subsection{The nonconforming case}

In addition to the BDM element, we now consider the Raviart-Thomas element (also called Raviart-Thomas-N\'ed\'elec in the three 
dimensional case). The spaces for the RT($k$) pair with $k\ge 0$ are
\begin{eqnarray*}
\mathbf H^h(\Omega_{\mathrm D})&:=&\{\mathbf u_h \in \mathbf H(\mathrm{div},\Omega_{\mathrm D})\,:\,\mathbf u_h|_T\in 
\underbrace{\mathbb P_{k}(T)^d\oplus \mathbb P_k(T)\{ \mathbf x\}}_{=\mathrm{RT}_k(T)} \quad\forall T\in \mathcal T^h_{\mathrm D}\},\\
 L^h(\Omega_{\mathrm D})&:=&\{ p_h :\Omega_{\mathrm D}\to \mathbb R\,:\, p_h|_T \in \mathbb P_{k}(T) \quad\forall T\in 
\mathcal T^h_{\mathrm D}\}.
\end{eqnarray*}
For this kind of coupling the Stokes and Darcy triangulations can be taken to be completely independent 
(although this might seriously complicate implementation in the three dimensional case). 
Condition \eqref{eq:6.1} is satisfied by the RT($k$) spaces $k\ge 0$ and the BDM($k$) spaces $k\ge 1$. 
For conditions concerning the lifting of the normal trace, see Section \ref{sec:5}. All Stokes spaces contain piecewise linear 
functions, which means that except in trivial geometric configurations Hypothesis \ref{hyp:2}(a) will be satisfied. 

\begin{table}[h]\label{table:2}
\begin{center}
\begin{tabular}{|c|c|c||c|c|c||c|}
\hline
$\phantom{\Big|}\hspace{-0.2cm}$ Stokes & Velocity & Press. & Darcy & Vel. & Press. & Order\\
\hline
$\phantom{\Big|}\hspace{-0.2cm}$ MINI & $\mathbb P_1$+bubbles & $\mathbb P_1^\mathrm{cont}$ & RT(0) & $\mathrm{RT}_0$ & $\mathbb P_0$ & $h$\\
$\phantom{\Big|}\hspace{-0.2cm}$ Taylor-Hood, $k\ge 2$ & $\mathbb P_k$ & $\mathbb P_{k-1}^\mathrm{cont}$ & RT($k-1$) & $\mathrm{RT}_{k-1}$ & $\mathbb P_{k-1}$ & $h^k$\\
$\phantom{\Big|}\hspace{-0.2cm}$ Bernardi-Raugel & $\mathbb P_1$+face bubbles & $\mathbb P_0$ & RT(0) & $\mathrm{RT}_0$ & $\mathbb P_0$ & $h$\\
$\phantom{\Big|}\hspace{-0.2cm}\mathbb P_2\mbox{-iso-}\mathbb P_1$ & $\mathbb P_1(\mathcal T^{h/2})$ & $\mathbb P_1^{\mathrm{cont}}$ & BDM(1) & $\mathbb P_1$ & $\mathbb P_0$ & $h$\\
\hline
\end{tabular}
\end{center}\caption{Coupling of Stokes elements with BDM and RT elements and their order of convergence. 
The superscript ${}^\mathrm{cont}$ refers to the demand of continuity for the discrete pressure space. 
The bubbles are used for velocities in the MINI  element. For the BR element, face bubbles are only included on the internal faces. 
Adding them to faces on $\Sigma$ does not change the convergence order. In that case BR can be coupled with BDM(1) as well.}
\end{table}

\section{Numerical results}\label{sec:8}

In order to confirm the good performance of our scheme (3.4), we present in this section the
combination of several stable Stokes elements with the Raviart-Thomas element and the
Brezzi-Douglas-Marini element, as shown in Tables \ref{table:1} and \ref{table:2}.
We begin by introducing some notations. 
The variable $N$ stands for the total number of degrees of freedom defining the 
finite element subspaces $\mathbb{X}^h$ and $\mathbb{Q}^h$, and 
the individual errors are denoted by:
\[
 \mathrm{e}(\mathbf{u}_{\mathrm D}) := \| \mathbf{u}_{\mathrm D} - \mathbf{u}^h_{\mathrm D} \|_{
\mathbf{H}(\mathrm{div}, \Omega_{\mathrm D})}, \qquad 
\mathrm{e}(\mathbf{u}_{\mathrm S}) := \| \mathbf{u}_{\mathrm S} - \mathbf{u}^h_{\mathrm S} \|_{
 \mathbf H^1(\Omega_{\mathrm S})
},
\]
and
 \[
  \mathrm{e}(p_{\mathrm D}) := \| p_{\mathrm D} - p^h_{\mathrm D}\|_{\Omega_{\mathrm D}},\qquad 
\mathrm{e}(p_{\mathrm S}) := \| p_{\mathrm S} - p^h_{\mathrm S}\|_{\Omega_{\mathrm S}},
 \]
where $\mathbf{u}^h_{\mathrm D} := \mathbf u_h|_{\Omega_{\mathrm D}}$, $\mathbf{u}^h_{\mathrm S} := \mathbf u_h|_{\Omega_{\mathrm D}}$, 
$p^h_{\mathrm D} := p_h|_{\Omega_{\mathrm D}}$ and $p^h_{\mathrm S} := p_h|_{\Omega_{\mathrm S}} + \delta_h$ with  
$(\mathbf u_h,(p_h,\delta_h))\in \mathbb X^h\times \mathbb Q^h$ being the solution of \eqref{eq:2.0}. We also let 
$r(\mathbf{u}_{\mathrm D})$, $r(\mathbf{u}_{\mathrm S})$, $r(p_{\mathrm D})$ and $r(p_{\mathrm S})$ be the experimental 
rates of convergence given by 
\[
 r(\mathbf{u}_{\mathrm D}) := \frac{\log(\mathrm{e}(\mathbf{u}_{\mathrm D})/ \mathrm{e}'(\mathbf{u}_{\mathrm D}) )}{\log(h/h')},\qquad
 r(\mathbf{u}_{\mathrm S}) := \frac{\log(\mathrm{e}(\mathbf{u}_{\mathrm S})/ \mathrm{e}'(\mathbf{u}_{\mathrm S}) )}{\log(h/h')},
\]
and
\[
 r(p_{\mathrm D}) := \frac{\log(\mathrm{e}(p_{\mathrm D})/ \mathrm{e}'(p_{\mathrm D}) )}{\log(h/h')},\qquad
 r(p_{\mathrm S}) := \frac{\log(\mathrm{e}(p_{\mathrm S})/ \mathrm{e}'(p_{\mathrm S}) )}{\log(h/h')},
\]
where $h$ and $h'$ are two consecutive mesh sizes with errors $\mathrm{e}$ and $\mathrm{e}'$.

\begin{figure}[ht!]
\footnotesize
\caption{MINI--BDM(1) coupling (left) and Bernardi-Raugel--RT(0) coupling (right).}
\label{figure:1}

\begin{center}
\begin{minipage}{7.5cm}
\centerline{\includegraphics[width=7cm, height=4cm, angle=00]{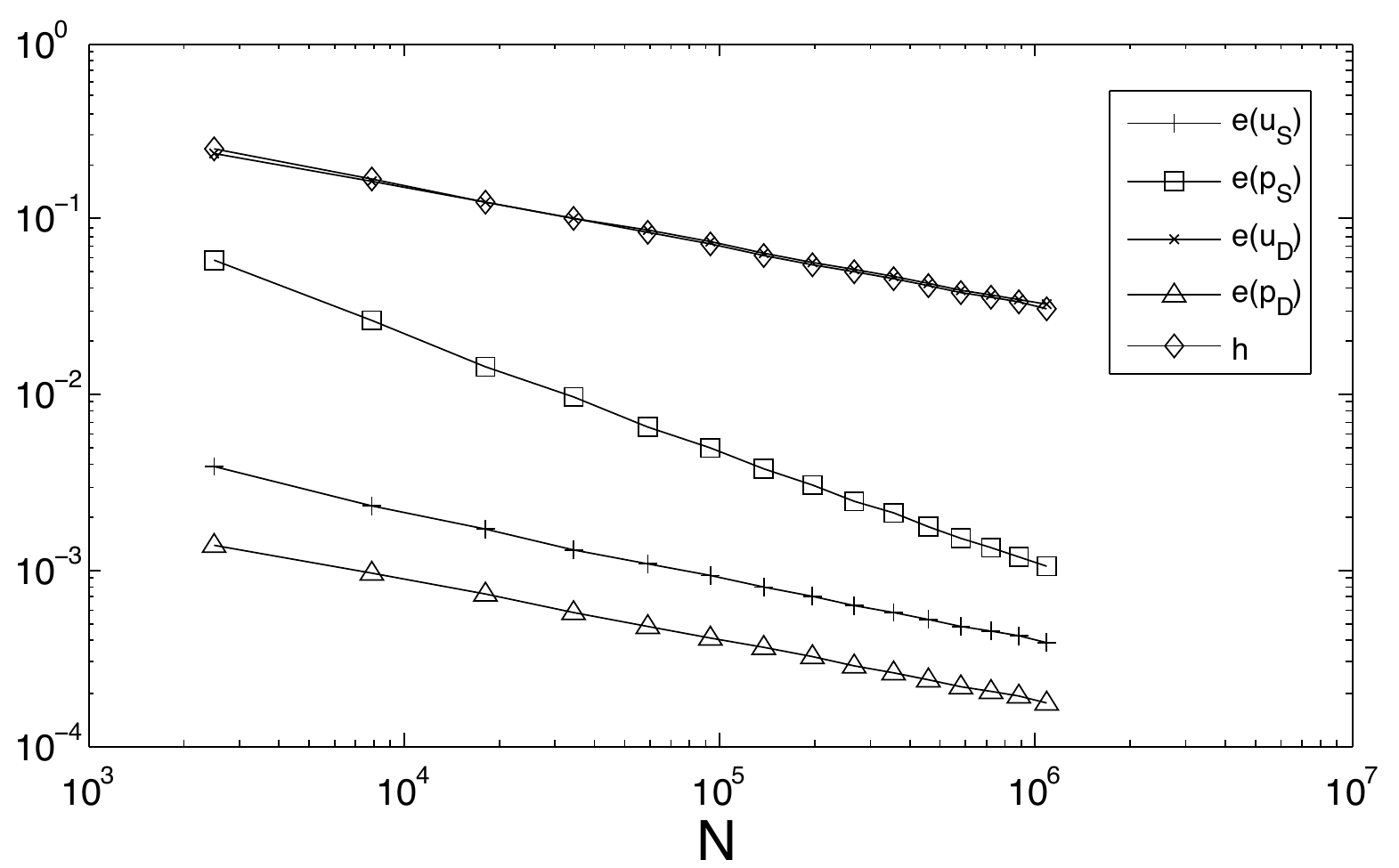}}
\end{minipage}
\begin{minipage}{7.5cm}
\centerline{\includegraphics[width=7cm, height=4cm, angle=00]{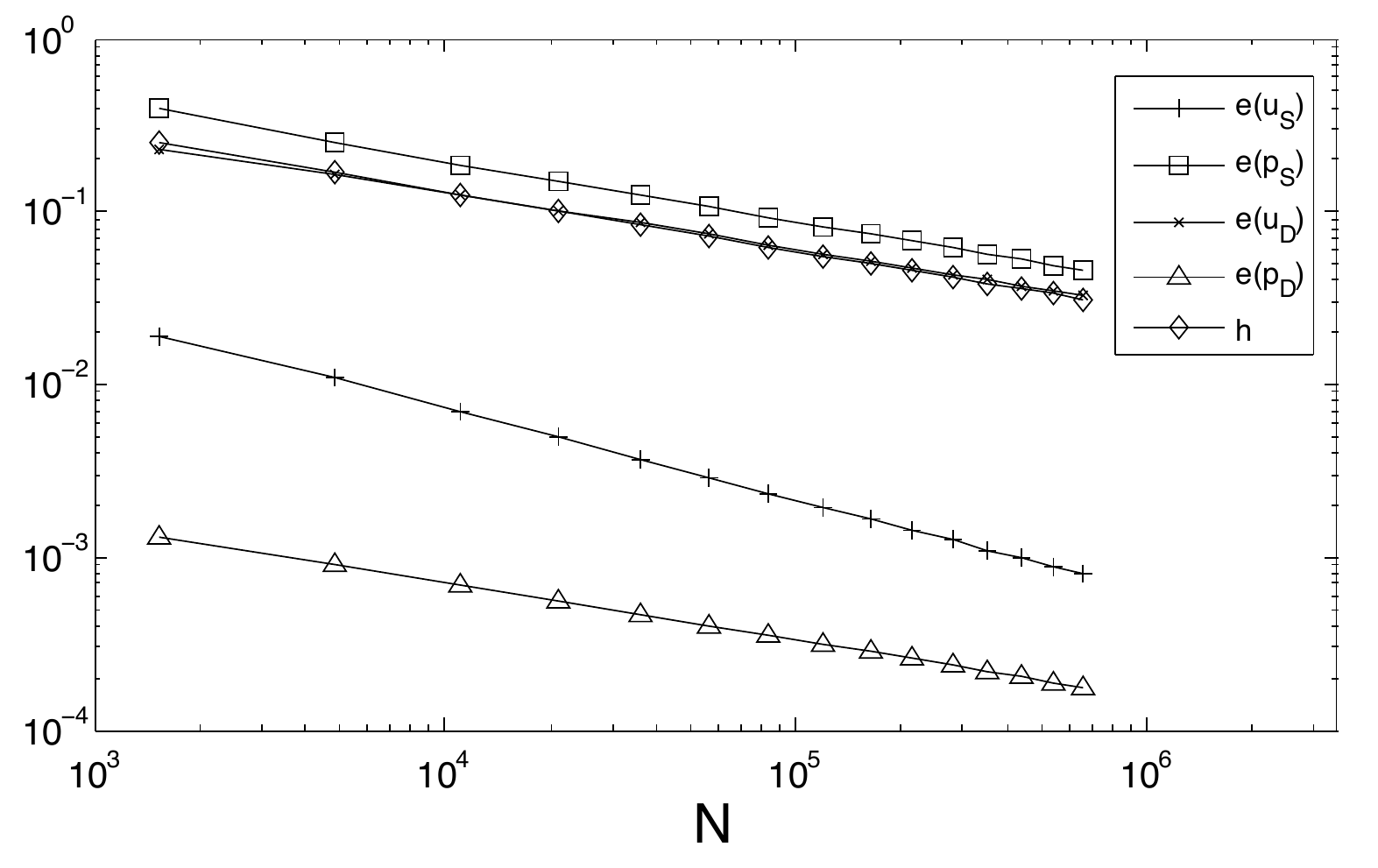}}
\end{minipage}
\end{center}
\end{figure}

\begin{table}[ht!]
\footnotesize
\caption{Convergence rates.}
\label{table:3}

\begin{center}
\begin{tabular}{|cc||cccc||cccc|}

\hline

& & \multicolumn{3}{c}{MINI--BDM(1)} & & \multicolumn{3}{c}{MINI--RT(0)} & \\ \hline

$h_\textrm{S}$  & $h_\textrm{D}$  & $r(\textbf{u}_\textrm{S})$
& $r(\textbf{u}_\textrm{D})$ & $r(p_\textrm{S})$ & $r(p_\textrm{D})$ 
& $r(\textbf{u}_\textrm{S})$
& $r(\textbf{u}_\textrm{D})$ & $r(p_\textrm{S})$ & $r(p_\textrm{D})$\\ \hline

$1/6$ & $1/12$ & $-$ & $-$ & $-$ & $-$ & $-$ & $-$ & $-$ & $-$ \\ \hline

$1/10$ &$1/20$ & 1.091 & 0.983 & 1.990 & 1.017 & 1.091 & 0.979 & 1.990 & 0.988 \\ \hline

$1/14$ &$1/28$ & 1.063 & 0.993 & 1.955 & 1.010 & 1.063 & 0.991 & 1.955 & 0.995 \\ \hline

$1/18$ &$1/36$ & 1.047 & 0.996 & 1.926 & 1.006 & 1.047 & 0.995 & 1.926 & 0.997 \\ \hline

\end{tabular}
\end{center}
\end{table}

\begin{table}[ht!]
\footnotesize
\caption{Convergence rates.}
\label{table:4}

\begin{center}
\begin{tabular}{|cc||cccc||cccc|}
\hline

& & \multicolumn{3}{c}{MINI--BDM(1)} &  & \multicolumn{3}{c}{MINI--RT(0)} & \\ \hline

$h_\textrm{S}$  & $h_\textrm{D}$  & $r(\textbf{u}_\textrm{S})$
&$r(\textbf{u}_\textrm{D})$&$r(p_\textrm{S})$ & $r(p_\textrm{D})$ & 
$r(\textbf{u}_\textrm{S})$
&$r(\textbf{u}_\textrm{D})$&$r(p_\textrm{S})$ & $r(p_\textrm{D})$\\ \hline

$1/12$ & $1/6$ & $-$ & $-$ & $-$ & $-$ & $-$ & $-$ & $-$ & $-$ \\ \hline

$1/20$ &$1/10$ & 1.063 & 0.933 & 1.907 & 0.996 & 1.061 & 0.921 & 1.907 & 0.951 \\ \hline

$1/28$ &$1/14$ & 1.037 & 0.971 & 1.856 & 1.021 & 1.037 & 0.966 & 1.856 & 0.980 \\ \hline

$1/36$ &$1/18$ & 1.027 & 0.984 & 1.824 & 1.018 & 1.026 & 0.981 & 1.824 & 0.989 \\ \hline

\end{tabular}
\end{center}
\end{table}

We now describe the data of the example. We consider the domains
$\Omega_\mathrm{D} := (0,1)^2 \times (0,0.5)^3$ and $\Omega_\mathrm{S} := (0,1)^2 \times
(0.5,1)$, and take $\nu =1$, $\kappa = 1$ and $\textbf{K} = \textbf{I}$, the identity of $\mathbb{R}^{3 \times 3}$. Non-homogeneous 
transmission conditions are considered in order to have an exact solution given by:
\[
p_\textrm{D}(\boldsymbol{x}):= x_1(1-x_1)\sin(2\pi x_1)\,x_2(1-x_2)\sin(2\pi x_2)\,x_3 \sin(2\pi x_3)-p_{\textrm{D}0}\,,
\]
in the porous media and by
$$
\textbf{u}_\textrm{S}(\boldsymbol{x}) := x_1(1-x_1)x_2(1-x_2)x_3(1-x_3)\left(\begin{array}{c}
                                                               -2x_1(1-x_1)(1-2x_2)(1-2x_3) \\
                                                               x_2(1-x_2)(1-2x_1)(1-2x_3) \\
                                                               x_3(1-x_3)(1-2x_1)(1-2x_2)
                                                             \end{array}
\right)\,,
$$
and
$$
p_\textrm{S}(\boldsymbol{x}) := \exp(x_1+x_2+x_3)\:,
$$
in the Stokes domain.

\begin{table}[ht!]
\footnotesize
\caption{Convergence rates.}
\label{table:5}

\begin{center}
\begin{tabular}{|cc||cccc||cccc|}
\hline

& & \multicolumn{3}{c}{Bernardi-Raugel--BDM(1)} & & \multicolumn{3}{c}{Bernardi-Raugel--RT(0)} & \\ \hline
$h_\textrm{S}$  & $h_\textrm{D}$  & 
$r(\textbf{u}_\textrm{S})$
&$r(\textbf{u}_\textrm{D})$&$r(p_\textrm{S})$ & $r(p_\textrm{D})$ & 
$r(\textbf{u}_\textrm{S})$
&$r(\textbf{u}_\textrm{D})$&$r(p_\textrm{S})$ & $r(p_\textrm{D})$\\ \hline

$1/6$ & $1/12$ & $-$ & $-$ & $-$ & $-$ & $-$ & $-$ & $-$ & $-$ \\ \hline

$1/10$ &$1/20$ & 1.537 & 0.983 & 1.034 & 1.020 & 1.537 & 0.979 & 1.038 & 0.991 \\ \hline

$1/14$ &$1/28$ & 1.574 & 0.993 & 1.021 & 1.011 & 1.574 & 0.991 & 1.021 & 0.996 \\ \hline

$1/18$ &$1/36$ & 1.574 & 0.996 & 1.014 & 1.007 & 1.574 & 0.995 & 1.014 & 0.998 \\ \hline

\end{tabular}
\end{center}
\end{table}

\begin{table}[ht!]
\footnotesize
\caption{Convergence rates.}
\label{table:6}

\begin{center}
\begin{tabular}{|cc||cccc||cccc|}
\hline
& & \multicolumn{3}{c}{Bernardi-Raugel--BDM(1)} & & \multicolumn{3}{c}{Bernardi-Raugel--RT(0)} & \\ \hline
\hline $h_\textrm{S}$  & $h_\textrm{D}$  & 
$r(\textbf{u}_\textrm{S})$
&$r(\textbf{u}_\textrm{D})$&$r(p_\textrm{S})$ & $r(p_\textrm{D})$ & 
$r(\textbf{u}_\textrm{S})$
&$r(\textbf{u}_\textrm{D})$&$r(p_\textrm{S})$ & $r(p_\textrm{D})$\\ \hline

$1/12$ & $1/6$ & $-$ & $-$ & $-$ & $-$ & $-$ & $-$ & $-$ & $-$ \\ \hline

$1/20$ &$1/10$ & 1.573 & 0.933 & 1.014 & 0.996 &  1.573 & 0.921& 1.014 & 0.951 \\ \hline

$1/28$ &$1/14$ &  1.543 & 0.971 & 1.008 & 1.021 &  1.543 & 0.966 & 1.008 & 0.980 \\ \hline

$1/36$ &$1/18$ &  1.503 & 0.984 & 1.005 & 1.018 & 1.503 & 0.981 & 1.005 & 0.989 \\ \hline

\end{tabular}
\end{center}
\end{table}

The numerical results were obtained using a MATLAB code. In Figure \ref{figure:1} we summarize the convergence 
history of the Galerkin scheme (3.4) for a sequence of uniform meshes of the
computational domain $\Omega := (0,1)^3$ by means of tetrahedra. We select the conforming example consisting in  
the MINI--BDM(1) coupling and the nonconforming example given by the 
 Bernardi-Raugel--RT(0) coupling. In each case we display the individual errors  versus the degrees of
freedom $N$. We observe  that, as expected,  the convergence is linear with respect to the discretization 
parameter $h$  in all the unknowns unless for the fluid pressure  in the MINI--BDM(1) example   
where a quadratic convergence is attained. We notice that the Bernardi-Raugel--RT(0) case  
delivers a convergence in the fluid velocity that is slightly faster than $O(h)$.

We also provide numerical results for triangulations with hanging nodes on the transmission 
interface. We consider, uniform triangulations of the subdomains $\Omega_D := (0,1/2)^3$ 
and $\Omega_S := (1/2,1)^3$ with a mesh size in one of the subdomains equal to half the mesh 
size in the other one. The expected rates of convergence are attained in all the (conforming and 
non-conforming) cases considered through Tables \ref{table:3}, \ref{table:4}, \ref{table:5} and \ref{table:6}.  

Summarizing, the numerical results presented here constitute enough support to our 
theory for the strong mixed finite element coupling of Darcy--Stokes flow problem. 
In a forthcoming work we will discuss an efficient 
iterative method to solve the linear system of equations arising from our 
discretization method.


\end{document}